\documentclass[12pt]{amsart}
\usepackage{epsfig,mathrsfs,amssymb,enumerate,comment,esint,graphicx}
\usepackage[dvipsnames]{xcolor}
\usepackage{mathrsfs}
\usepackage{hyperref}
\hypersetup{
    colorlinks=true, 
    linktoc=all,     
    linkcolor=blue,  
}

\theoremstyle{definition}
\newtheorem{theorem}{Theorem}[section]
\newtheorem{definition}[theorem]{Definition}

\newtheorem{lemma}[theorem]{Lemma}
\newtheorem{remark}[theorem]{Remark}
\newtheorem{corollary}[theorem]{Corollary}

\numberwithin{equation}{section}

\title[Modified Mean Curvature Flow in Fuchsian Manifolds]{Modified Mean Curvature Flow in Fuchsian Manifolds}

\author{Yuk Shing Lam}
\address[Y. Lam]{Mathematics Department\\University of California, Santa Cruz\\1156 High Street\\
Santa Cruz, CA 95064\\USA}
\email{ylam14@ucsc.edu}

\author{Longzhi Lin}
\address[L. Lin]{Mathematics Department\\University of California, Santa Cruz\\1156 High Street\\
Santa Cruz, CA 95064\\USA}
\email{lzlin@ucsc.edu}

\subjclass[2020]{53C44, 57K32, 58J35}

\begin{document}

\begin{abstract}
In this paper, we show that the modified mean curvature flow starting from an arbitrary graph in a Fuchsian manifold exists for all time and converges smoothly to an equidistant surface of constant mean curvature as $t\to \infty$. This result generalizes earlier work to the modified mean curvature flow setting and removes the restrictive global gradient bound initially required for the standard mean curvature flow by Huang, Zhou, and the second author \cite{HLZ2020}.
\end{abstract}

\maketitle

\tableofcontents

\section{Introduction}

The deformation of submanifolds via curvature-driven flows is a central subject in geometric analysis. These flows are instrumental in exploring the topological and geometric structures of ambient manifolds and are frequently employed to deform submanifolds into optimal configurations, such as minimal or constant mean curvature (CMC) surfaces. A well-known analytical obstacle in this pursuit is the formation of finite-time singularities, which commonly arise under geometric flows such as the standard mean curvature flow due to the avoidance principle. 

In complete hyperbolic 3-manifolds, such as quasi-Fuchsian manifolds, the ambient geometry provides a rich structure of foliations. Understanding the long-time behavior of geometric flows in such spaces yields profound insights into the existence and uniqueness of CMC surfaces. In this article, we focus on the evolution of surfaces within Fuchsian manifolds. A Fuchsian manifold $M^3$ is a complete hyperbolic 3-manifold that can be globally expressed as a warped product of a closed hyperbolic surface $\Sigma$ and the real line $\mathbb{R}$. The reference surface $\Sigma$ sits as the unique totally geodesic slice in $M^3$.

The primary goal of this paper is to study the evolution of geodesic graphs under the modified mean curvature flow \eqref{eqn:MMCF-new}, which incorporates a forcing term to drive the evolving surface toward a surface of prescribed constant mean curvature. Geometrically, this flow arises as the natural negative gradient flow of a functional combining the surface area with the volume enclosed between the surface and a fixed reference slice. It was originally introduced by the second author and Xiao \cite{LX12} to construct CMC hypersurfaces in hyperbolic space $\mathbb{H}^{n+1}$ subject to prescribed asymptotic Dirichlet boundary data at infinity (c.f. the asymptotic Plateau problem). In the unforced setting, the long-time existence and convergence of the standard mean curvature flow for graphs were established by Huang, the second author, and Zhang \cite{HLZ2020}, provided the initial surface satisfies suitable global bounds on its gradient or angle function. We refer to Huang, the second author, and Zhang \cite{HLZ2023} for the study of the modified mean curvature flow in the broader context of almost-Fuchsian manifolds. For earlier related results, we refer to \cite{U03, BM2012, HZZ19}.

In this paper, we show that the modified mean curvature flow starting from an arbitrary smooth geodesic graph in a Fuchsian manifold exists for all time and converges smoothly, as time tends to infinity, to an equidistant surface of constant mean curvature. This result generalizes earlier work to the modified mean curvature flow setting and removes the restrictive global gradient bound initially required for the standard mean curvature flow by Huang, Zhou, and the second author \cite{HLZ2020}. More precisely, we prove
\begin{theorem}\label{main-thm}
Let $M^3$ be a Fuchsian manifold and $\Sigma$ the unique closed totally geodesic surface in $M^3$. For any constant $\sigma \in (-2, 2)$ and any initial smooth closed surface $S_0 \subset M^3$ that is a geodesic graph over $\Sigma$, the modified mean curvature flow \eqref{eqn:MMCF-new} starting from $S_0$ exists for all time, remains a geodesic graph over $\Sigma$, and converges smoothly as $t \to \infty$ to the equidistant surface $\Sigma(r_\sigma)$, where $r_\sigma = \text{arctanh}\left(\frac{\sigma}{2}\right)$.
\end{theorem}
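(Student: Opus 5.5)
The plan is to reduce the flow to a scalar quasilinear parabolic equation and then run the standard a priori estimate scheme: $C^0$, gradient, curvature and higher derivatives, followed by convergence. Write the Fuchsian metric as $g = dr^2 + \cosh^2 r\, g_\Sigma$. Then the equidistant surface $\Sigma(r)$ has mean curvature $2\tanh r$ with respect to the normal $\partial_r$, so $\Sigma(r_\sigma)$ is exactly the slice with $H=\sigma$. Represent $S_t$ as a graph $r = u(x,t)$ over $\Sigma$. The normal flow \eqref{eqn:MMCF-new}, after a tangential reparametrization, becomes
\[
\partial_t u = W\,(H[u]-\sigma), \qquad W=\sqrt{1+\cosh^{-2}u\,|\nabla u|^2_{g_\Sigma}},
\]
which is uniformly parabolic on any set where $u$ and $\nabla u$ are bounded. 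Short-time existence on the closed surface $\Sigma$ is then standard, and the flow remains a graph as long as $W$ stays bounded.

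\textbf{Step 1 ($C^0$ bound).} The slices $\Sigma(r)$ serve as barriers. At a spatial maximum of $u$ we have $\nabla u=0$ and $\nabla^2 u\le 0$, so the surface lies below the tangent slice there. By comparison, $H[u]\le 2\tanh u_{\max}$ (with the appropriate sign convention). Hence $\frac{d}{dt}u_{\max}\le 2\tanh u_{\max}-\sigma$. Since $\tanh$ is increasing and $|\sigma|<2$, this yields $u_{\max}(t)\le\max(u_{\max}(0),r_\sigma)$, and symmetrically for the minimum. So $\sup|u|$ is bounded uniformly in time. Moreover, the barriers $\Sigma(r)$ move monotonically toward $\Sigma(r_\sigma)$ under the ODE $\dot r = 2\tanh r-\sigma$ run backwards; I would use them to show $\limsup u_{\max}\le r_\sigma\le\liminf u_{\min}$, which already identifies the only possible limit.

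\textbf{Step 2 (gradient bound, the main obstacle).} We need a bound on $W$ with no smallness assumption on the initial gradient. Let $\Theta=\langle \nu, \cosh r\,\partial_r\rangle$. The vector field $\cosh r\,\partial_r$ is conformal Killing, and in fact closed conformal, since $\nabla_X(\cosh r\,\partial_r)=\sinh r\,X$. This gives a clean evolution equation of the form
\[
(\partial_t-\Delta)\Theta = |A|^2\Theta + (\text{terms involving } \sinh r\,(H-\sigma)\text{ and }\mathrm{Ric}(\nu,\cdot)) .
\]
I would apply the maximum principle to $v=\cosh u/\Theta$ (comparable to $W$) multiplied by a weight $\varphi(u)$ chosen so that the bad zeroth-order terms, which have indefinite sign because of the forcing $\sigma$ and the ambient curvature $-1$, are absorbed by the good $-|A|^2$-type and gradient terms. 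Here the $C^0$ bound makes $\varphi$ and its derivatives bounded. I expect that choosing $\varphi$ (for instance $e^{Ku}$ or a function of $\cosh u$) correctly, and handling the $\sigma\sinh u$ term uniformly in $t$, is the crux. If a time-uniform bound fails directly, I would first obtain a bound growing at most exponentially, which already gives long-time existence, and then upgrade it using the convergence of Step 1.

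\textbf{Step 3 (higher regularity, long-time existence, convergence).} Once $u$ and $W$ are bounded, the equation is uniformly parabolic with bounded coefficients. Krylov–Safonov and Schauder theory (or Ecker–Huisken-type estimates for $|A|^2$ using $v$) give uniform $C^{k}$ bounds for all $k$, so the flow exists for all time and stays a graph. For convergence, the functional $\mathcal{J}(S)=\mathrm{Area}(S)-\sigma\,\mathrm{Vol}(S,\Sigma)$ is nonincreasing with $\frac{d}{dt}\mathcal J=-\int_{S_t}(H-\sigma)^2$ and is bounded below by the $C^0$ bound. Therefore, along a sequence $t_i\to\infty$, the surfaces subconverge smoothly to a closed graph of constant mean curvature $\sigma$. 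By the tangency/maximum-principle comparison with slices at the max and min of $u$, such a graph must be $\Sigma(r_\sigma)$. Alternatively, this follows from the sandwich in Step 1. Uniqueness of the limit together with the uniform bounds and interpolation then gives full smooth convergence $S_t\to\Sigma(r_\sigma)$.
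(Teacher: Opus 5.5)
Your Step 2 is a genuine gap, and the maximum-principle plan you sketch for it cannot close. Set $\Lambda=\langle\nu,\cosh r\,\partial_r\rangle=\cosh(u)/w$, so your $v$ is exactly $w$. Your closed-conformal computation gives $(\partial_t-\Delta)\Lambda=|A|^2\Lambda-2\sinh(u)H+\sigma\sinh(u)$, and any test function of your type is $\psi=\varphi(u)/\Lambda$. At a spatial maximum of $\psi$ with $Du\neq0$, the condition $\nabla\psi=0$ makes $V^T=\nabla\sinh u$ a principal direction and fixes only its principal curvature $\kappa_1$; the gradient cross terms then cancel. The other principal curvature $\kappa_2$ is completely unconstrained. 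The part $-\varphi\Lambda^{-1}\kappa_2^2+2\varphi\sinh(u)\Lambda^{-2}\kappa_2$ of $\varphi(\partial_t-\Delta)\Lambda^{-1}$ equals $\varphi\sinh^2(u)\Lambda^{-3}\sim w^3$ when $\kappa_2=\sinh(u)/\Lambda$. Every term produced by the weight or by $\sigma$ is only $O(w^2)$, the largest being $-\sigma\sinh(u)\varphi\Lambda^{-2}$. So unless $\sinh u\equiv0$, the maximum principle gives at best $\frac{d}{dt}\psi_{\max}\le C(1+\psi_{\max}^3)$. That yields no uniform bound, and not even the exponential bound your fallback starts from. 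The same $w^3\tanh^2(u)$ term is visible in $-w|A|^2+2w^2\tanh(u)H$ in Lemma~\ref{lem:evol_w}. The fallback also lacks a mechanism: $C^0$ convergence alone does not control $Du$. You would need curvature bounds whose growth is beaten by the decay $|u-r_\sigma|\le Me^{-\frac{4-\sigma^2}{2}t}$, and you never extract that rate from Step 1. Step 3, including the $\mathcal J$ argument, is fine, but it presupposes a uniform bound on $W$, so long-time existence and convergence both rest on this missing estimate.

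The paper's route is exactly that quantitative comparison. It proves a linear bound $w\le C(1+t)$ from $\Phi=\cosh(u)w$ (Theorem~\ref{thm:gradient_bound-sigma-0}). It then gets $|A|\le C(1+t)^4$ from the Ecker--Huisken quantity $|A|^2w^2e^{c_Tw^2}$ with $c_T=(2C_T^2)^{-1}$ (Lemma~\ref{lem:A2_time_dependent_bound}). Finally it blows up at curvature maxima: the rescaled slab has width $\lesssim(1+t_k)^4e^{-\frac{4-\sigma^2}{2}t_k}\to0$, which forces a flat limit and contradicts $|\hat A_\infty(0,0)|=1$. Be aware, though, that the first link does not carry over to your setting. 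For a graph over fixed base points the normal speed is $u_t/w$, so the flow is $u_t=-w(H-\sigma)$: your factor $W$ is correct, though your sign is not. The paper's $u_t=-(H-\sigma)/w$ is instead the rate of change of $r$ along normal trajectories, and Sections~\ref{sec:linearized-operator}--\ref{sec:time-dep-gra-bound} and the $\Phi$-computation are carried out for that equation. Since $\cosh(u)w=\cosh^2(u)/\Lambda$ has your form $\varphi(u)/\Lambda$, the $w^3$ obstruction above applies to it for the actual flow. So the polynomial-versus-exponential scheme has the right shape, but its first input is precisely the estimate you could not produce.

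Also fix the sign slip in Step 1. With $H=2\tanh r$ on $\Sigma(r)$ for $\nu=\partial_r$, at a maximum one has $H\ge2\tanh u_{\max}$, hence $\frac{d}{dt}u_{\max}\le\sigma-2\tanh u_{\max}$. Your inequality $\frac{d}{dt}u_{\max}\le2\tanh u_{\max}-\sigma$ gives nothing above $r_\sigma$. The slices approach $\Sigma(r_\sigma)$ forward in time under $\dot r=\sigma-2\tanh r$, not backwards.
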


The paper is organized as follows. In Section \ref{Prelim}, we collect the necessary geometric properties of Fuchsian manifolds and formally set up the modified mean curvature flow. We then derive the evolution equation for the height function and establish a height estimate. In Section \ref{sec:linearized-operator}, we introduce the crucial linearized operator, derived from the evolution equation of the height function. Section \ref{sec:time-dep-gra-bound} is devoted to establishing a time-dependent gradient estimate and long-time existence of the flow. Finally, in Section \ref{sec:unif-gra-boud-conv}, we prove the global time-independent uniform gradient bound and the asymptotic smooth convergence of the flow.

\section{Preliminaries}\label{Prelim}
In this section, we establish the notational conventions and recall the basic geometric features of Fuchsian manifolds. We also define the modified mean curvature flow and the precise notion of graphical hypersurfaces in this ambient space.

\subsection{The Ambient Fuchsian Geometry}
Let $\Sigma$ be a closed surface of genus $g \ge 2$ equipped with a hyperbolic metric $g_0$. A Fuchsian manifold $M^3$ is constructed as the warped product $\mathbb{R} \times \Sigma$. We equip $M^3$ with the standard warped product metric:
\begin{equation}
\bar{g} = dr^2 + \cosh^2(r)g_0,
\end{equation}
where $r \in \mathbb{R}$ serves as the coordinate along the real line. Under this metric, $M^3$ is a complete 3-manifold with constant sectional curvature $-1$. 

The level sets $\Sigma(r) = \{r\} \times \Sigma$ form a global equidistant foliation of $M^3$. The slice $\Sigma(0) = \Sigma$ is the unique totally geodesic surface in the manifold. For any $r \neq 0$, the slice $\Sigma(r)$ is a totally umbilic surface with principal curvatures equal to $\tanh(r)$.

We denote the Levi-Civita connection on $M^3$ by $\overline{\nabla}$. The unit normal vector field to the foliation $\Sigma(r)$ is given by $\mathbf{n}: = \frac{\partial}{\partial r}$. A highly useful geometric feature of the Fuchsian manifold is the existence of the conformal vector field $V = \cosh(r) \mathbf{n}$. It can be easily checked that for any tangent vector field $X$ on $M^3$, the covariant derivative of the normal field $\mathbf{n}$ satisfies (see \cite[Lemma 2.1]{HLZ2020}):
\begin{equation}\label{eqn:conformal-vf}
\overline{\nabla}_X \mathbf{n} = \tanh(r)(X - \bar{g}(X, \mathbf{n})\mathbf{n}).
\end{equation}

\subsection{Modified Mean Curvature Flow and Graphical Surfaces}
Let $S$ be a closed surface. We consider a smooth family of immersions $F: S \times [0, T) \to M^3$. The modified mean curvature flow is defined by the initial value problem:
\begin{equation}\label{eqn:MMCF-new}
\begin{cases}
\frac{\partial}{\partial t}F(x,t) = -(H(x,t) - \sigma)\nu(x,t), \\
F(\cdot, 0) = F_0,
\end{cases}
\end{equation}
where $H(x,t)$ is the mean curvature of the evolving surface $S(t) = F(S, t)$, $\nu(x,t)$ is the chosen unit normal vector field on $S(t)$, and $\sigma \in (-2,2)$ is a given real constant. Here, we adopt the convention that the mean curvature is the trace of the second fundamental form. 

To analyze the geometry of $S(t)$ relative to the foliation of the ambient space, we define the height function $u(x,t)$ as the signed distance from the point $F(x,t)$ to the totally geodesic reference surface $\Sigma$:
\begin{equation}
u(x,t) = r(F(x,t)).
\end{equation}
The relative orientation of the surface $S(t)$ is measured by the angle function $\Theta(x,t)$, defined as the inner product between the upward-pointing vertical vector field $\mathbf{n}$ and the surface normal $\nu$:
\begin{equation}\label{eqn:angle-function}
\Theta(x,t) = \bar{g}(\mathbf{n}, \nu)(F(x,t)).
\end{equation}

With these quantities established, we formalize the definition of a graphical surface in this setting.

\begin{definition} \label{def:graph}
A closed, smooth surface $S_0 \subset M^3$ is defined as a geodesic graph over the totally geodesic surface $\Sigma$ if there exists a constant $c_0 > 0$ such that the angle function satisfies $\Theta_0 = \bar{g}(\mathbf{n}, \nu_0) \ge c_0$ everywhere on $S_0$. 
\end{definition}

If $\Theta(x,t) \in (0, 1]$ everywhere, the evolving surface $S(t)$ can be represented globally by the height function $r = u(x,t)$ over the reference surface $\Sigma$. A surface is a constant height slice $\Sigma(r)$ if and only if $\Theta \equiv 1$.
\vskip 2mm

Let $D$, $\nabla$, and $\bar{\nabla}$ denote the Levi-Civita connections on the fixed reference surface $(\Sigma, g_0)$, the evolving surface $(S(t), g(t))$, and the ambient manifold $(M^3, \bar{g})$, respectively. At any \textit{fixed} time $t$, we parameterize the evolving surface $$\Sigma_t:= S(t)$$ as a graph over $\Sigma$ via the immersion $F(x) = (u(x), x)$ for $x \in \Sigma$. Choosing a local coordinate system $(x^1, x^2)$ on $\Sigma$ with the coordinate frame $\left\{\frac{\partial}{\partial x^1}, \frac{\partial}{\partial x^2}\right\}$, the covariant derivatives of the height function $u: \Sigma \to \mathbb{R}$ are denoted by:
\[
u_i = D_i u , \qquad u_{ij} = D_i D_j u, \qquad \text{ and }\qquad |Du|^2:= |Du|_{g_0}^2=(g_0)^{ij}u_iu_j.
\]
Then
\[
F_i=\frac{\partial F}{\partial x^i}=\frac{\partial u}{\partial x^i}\frac{\partial }{\partial r}+\frac{\partial }{\partial x^i}=u_i\frac{\partial }{\partial r}+\frac{\partial }{\partial x^i}.
\]
In the following, we denote $\left\langle \cdot,\cdot\right\rangle:= \left\langle \cdot,\cdot\right\rangle_{\bar{g}}$. Then the induced metric is
\begin{align*}
g_{ij}&=\left\langle F_i,F_j\right\rangle \\
&=\left\langle u_i\frac{\partial }{\partial r}+\frac{\partial }{\partial x^i},u_j\frac{\partial }{\partial r}+\frac{\partial }{\partial x^j}\right\rangle \\
&=u_i u_j\left\langle \frac{\partial }{\partial r}, \frac{\partial }{\partial r}\right\rangle + u_i\left\langle\frac{\partial }{\partial r} ,\frac{\partial }{\partial x^j}\right\rangle + u_j\left\langle\frac{\partial }{\partial x^i} ,\frac{\partial }{\partial r}\right\rangle + \left\langle\frac{\partial }{\partial x^i} ,\frac{\partial }{\partial x^j}\right\rangle.
\end{align*}
Since the ambient metric is $\bar{g}=dr^2+\cosh^2(r) g_0$, we have
\[
\left\langle \frac{\partial}{\partial r},\frac{\partial}{\partial r}\right\rangle=1,\quad\left\langle \frac{\partial}{\partial r},\frac{\partial}{\partial x^i}\right\rangle=0,\quad\left\langle \frac{\partial}{\partial x^i},\frac{\partial}{\partial x^j}\right\rangle=\cosh^2(r)(g_0)_{ij}.
\]
Thus, evaluating at $r=u$,
\begin{equation}
g_{ij}=u_iu_j+\cosh^2(u)(g_0)_{ij}.
\end{equation}
On the other hand, the inverse metric $g^{ij}$ can be deduced from
\[
\delta_{\ell}^j=g_{\ell k}g^{kj}.
\]
Contracting this identity with $(g_0)^{i \ell}$ gives
\begin{align*}
(g_0)^{i \ell} \delta_{\ell}^j&=(g_0)^{i \ell} g_{\ell k} g^{kj} \\
(g_0)^{ij}&=(g_0)^{i \ell}\left(u_{\ell} u_k + \cosh^2(u)(g_0)_{\ell k}\right) g^{kj} \\
&=u^i u_k g^{kj}+\cosh^2(u)g^{ij} \\
&=u^i \alpha^j+\cosh^2(u)g^{ij},
\end{align*}
where $\alpha^j :=u_k g^{kj}$, and hence
\begin{equation}\label{eqn:g_ij}
g^{ij} = \frac{1}{\cosh^2(u)}\left((g_0)^{ij}-u^i \alpha^j\right).
\end{equation}
To determine $\alpha^j$, we contract the identity $\delta_{\ell}^j=g_{\ell k}g^{kj}$ with $u^{\ell}$ this time:
\begin{align*}
u^\ell \delta_\ell^j&=u^{\ell} g_{\ell k} g^{kj} \\
u^j&=u^{\ell} \left(u_{\ell} u_k + \cosh^2(u)(g_0)_{\ell k}\right) g^{kj} \\
&=u^{\ell} u_{\ell} u_k g^{kj}+\cosh^2(u)u^{\ell} (g_0)_{\ell k} g^{kj}.
\end{align*}
Note that $u^{\ell} u_{\ell}=|Du|^2$ and $u^{\ell} (g_0)_{\ell k}=u_k$, so we obtain
\begin{align*}
u^j&=|Du|^2 \alpha^j+\cosh^2(u)\alpha^j \\
&=\alpha^j\left(|Du|^2+\cosh^2(u)\right).
\end{align*}
Therefore
\begin{equation}\label{eqn:alpha-j}
u_k g^{kj} = \alpha^j=\frac{u^j}{|Du|^2+\cosh^2(u)}.
\end{equation}
Substituting into \eqref{eqn:g_ij}, we obtain
\begin{align}\label{eqn:g^ij}
    g^{ij}=\frac{1}{\cosh^2(u)} \left((g_0)^{ij} - \frac{u^i u^j}{|Du|^2+\cosh^2(u)}\right).
\end{align}
Let 
\begin{equation}\label{eqn:gra-function}
w:=\sqrt{1+\frac{|Du|^2}{\cosh^2 (u)}}
\end{equation}
be the gradient function. Since $w^2 \cosh^2(u) = \cosh^2(u) + |Du|^2$, we have 
\begin{equation}\label{eqn:g-upper-ij}
g^{ij}=\frac{1}{\cosh^2(u)}\left((g_0)^{ij}-\frac{u^iu^j}{w^2\cosh^2(u)} \right).
\end{equation}

Now let $X$ be a tangent vector field on $\Sigma$, naturally extended to $M^3$ such that it commutes with the radial vector field, meaning the Lie bracket $\left[\frac{\partial}{\partial r}, X\right] = 0$. Using \eqref{eqn:conformal-vf} with $\mathbf{n} = \frac{\partial}{\partial r}$, we have
\begin{equation}\label{eqn:connection-1}
\bar\nabla_{\frac{\partial}{\partial r}}\frac{\partial}{\partial r} = \tanh(r)\left( \frac{\partial}{\partial r} - \bar{g}\left(\frac{\partial}{\partial r}, \frac{\partial}{\partial r}\right)\frac{\partial}{\partial r} \right) = 0.
\end{equation}
Furthermore, since $X$ is tangent to $\Sigma$, it is orthogonal to the radial direction ($\bar{g}(X, \mathbf{n}) = 0$). Therefore,
\[
\bar\nabla_{\frac{\partial}{\partial r}}X = \bar\nabla_X\frac{\partial}{\partial r} = \tanh(r)\left( X - \bar{g}\left(X, \frac{\partial}{\partial r}\right)\frac{\partial}{\partial r} \right) = \tanh(r)X.
\]
In particular,
\begin{equation}\label{eqn:connection-2}
\bar\nabla_{\frac{\partial}{\partial r}}\frac{\partial}{\partial x^j} = \bar\nabla_{\frac{\partial}{\partial x^j}}\frac{\partial}{\partial r} = \tanh(r)\frac{\partial}{\partial x^j}.
\end{equation}

Next, we compute the covariant derivative $\bar{\nabla}_{\frac{\partial}{\partial x^i}}\frac{\partial}{\partial x^j}$. The metric components in the local coordinate frame $\{r, x^1, x^2\}$ are:
\[
\bar{g}_{rr} = 1, \qquad \bar{g}_{ri} = 0, \qquad \bar{g}_{ij} = \cosh^2(r)(g_0)_{ij}.
\]
The inverse metric components are:
\[
\bar{g}^{rr} = 1, \qquad \bar{g}^{ri} = 0, \qquad \bar{g}^{ij} = \frac{1}{\cosh^2(r)}(g_0)^{ij}.
\]
Expanding $\bar{\nabla}_{\frac{\partial}{\partial x^i}}\frac{\partial}{\partial x^j}$ into its radial and spatial components, we have
\[
\bar{\nabla}_{\frac{\partial}{\partial x^i}}\frac{\partial}{\partial x^j} = \bar{\Gamma}_{ij}^r \frac{\partial}{\partial r} + \bar{\Gamma}_{ij}^k \frac{\partial}{\partial x^k}.
\]
For $\bar{\Gamma}_{ij}^r$, we compute:
\begin{align*}
\bar{\Gamma}_{ij}^r = \frac{1}{2} \bar{g}^{rr} \left( \frac{\partial \bar{g}_{jr}}{\partial x^i} + \frac{\partial \bar{g}_{ir}}{\partial x^j} - \frac{\partial \bar{g}_{ij}}{\partial r} \right) = - \cosh(r)\sinh(r)(g_0)_{ij}.
\end{align*}
For $\bar{\Gamma}_{ij}^k$, where $k \in \{1, 2\}$, we have:
\begin{align*}
\bar{\Gamma}_{ij}^k &= \frac{1}{2} \bar{g}^{k\ell} \left( \frac{\partial \bar{g}_{j\ell}}{\partial x^i} + \frac{\partial \bar{g}_{i\ell}}{\partial x^j} - \frac{\partial \bar{g}_{ij}}{\partial x^\ell} \right) \\
&= \frac{1}{2} \left[ \frac{1}{\cosh^2(r)} (g_0)^{k\ell} \right] \left( \cosh^2(r)\frac{\partial (g_0)_{j\ell}}{\partial x^i} + \cosh^2(r)\frac{\partial (g_0)_{i\ell}}{\partial x^j} - \cosh^2(r)\frac{\partial (g_0)_{ij}}{\partial x^\ell} \right)\\
&= \frac{1}{2} (g_0)^{k\ell} \left( \frac{\partial (g_0)_{j\ell}}{\partial x^i} + \frac{\partial (g_0)_{i\ell}}{\partial x^j} - \frac{\partial (g_0)_{ij}}{\partial x^\ell} \right) \\
&= (\Gamma_0)_{ij}^k,
\end{align*}
where $(\Gamma_0)_{ij}^k$ are the Christoffel symbols of the metric $g_0$ on the refence surface $\Sigma$. Using that $D_{\frac{\partial}{\partial x^i}}\frac{\partial}{\partial x^j} = (\Gamma_0)_{ij}^k \frac{\partial}{\partial x^k}$, we have
\begin{align}\label{eqn:connection-3}
\bar{\nabla}_{\frac{\partial}{\partial x^i}}\frac{\partial}{\partial x^j} &= \bar{\Gamma}_{ij}^k \frac{\partial}{\partial x^k} + \bar{\Gamma}_{ij}^r \frac{\partial}{\partial r} \notag\\
&= D_{\frac{\partial}{\partial x^i}}\frac{\partial}{\partial x^j} - \cosh(r)\sinh(r)(g_0)_{ij}\frac{\partial}{\partial r}.
\end{align}

We now compute the second fundamental form
\begin{equation}\label{eqn:2ndFF}
h_{ij}=-\langle \bar\nabla_{F_i}F_j,\nu\rangle.
\end{equation}
Recall that
\[
F_i=u_i\frac{\partial}{\partial r}+\frac{\partial}{\partial x^i}.
\]
Then
\begin{align*}
\bar\nabla_{F_i}F_j&=\bar\nabla_{u_i\frac{\partial}{\partial r}+\frac{\partial}{\partial x^i}} \left(u_j\frac{\partial}{\partial r}+\frac{\partial}{\partial x^j}\right) \\
&=u_i u_j \bar\nabla_{\frac{\partial}{\partial r}}\frac{\partial}{\partial r} +u_i \bar\nabla_{\frac{\partial}{\partial r}}\frac{\partial}{\partial x^j} +u_j \bar\nabla_{\frac{\partial}{\partial x^i}}\frac{\partial}{\partial r} +\bar\nabla_{\frac{\partial}{\partial x^i}}\frac{\partial}{\partial x^j} +(\partial_i u_j)\frac{\partial}{\partial r}.
\end{align*}
Using \eqref{eqn:connection-1}, \eqref{eqn:connection-2},\eqref{eqn:connection-3} and evaluating at $r=u$, this simplifies to
\begin{align}\label{eqn:2ndFF-1}
\bar\nabla_{F_i}F_j= & u_i\tanh(u)\frac{\partial}{\partial x^j} +u_j\tanh(u)\frac{\partial}{\partial x^i} +D_{\frac{\partial}{\partial x^i}}\frac{\partial}{\partial x^j} \notag\\
& -\cosh(u)\sinh(u)(g_0)_{ij}\frac{\partial}{\partial r} +(\partial_i u_j)\frac{\partial}{\partial r}.
\end{align}
Now let $\tilde{\nu} = \frac{\partial}{\partial r} + B^k \frac{\partial}{\partial x^k}$ be an unnormalized normal vector, then we have
\begin{align*}
0 &= \left\langle \tilde{\nu}, F_i \right\rangle \\
&= \left\langle \frac{\partial}{\partial r} + B^k \frac{\partial}{\partial x^k}, u_i \frac{\partial}{\partial r} + \frac{\partial}{\partial x^i} \right\rangle \\
&= u_i \left\langle \frac{\partial}{\partial r}, \frac{\partial}{\partial r} \right\rangle + B^k \left\langle \frac{\partial}{\partial x^k}, \frac{\partial}{\partial x^i} \right\rangle \\
&= u_i + B^k \cosh^2(u) (g_0)_{ki} \\
&= u_i + \cosh^2(u) B_i\,,
\end{align*}
which yields $B_i = -\frac{u_i}{\cosh^2(u)}$, and raising the index gives $B^k = -\frac{u^k}{\cosh^2(u)}$. Thus,
\[
\tilde{\nu} = \frac{\partial}{\partial r} - \frac{u^k}{\cosh^2(u)} \frac{\partial}{\partial x^k}
\]
with
\begin{align*}
|\tilde{\nu}|^2 &= \left\langle \tilde{\nu}, \tilde{\nu} \right\rangle \\
&= \left\langle \frac{\partial}{\partial r}, \frac{\partial}{\partial r} \right\rangle + \frac{u^k u^\ell}{\cosh^4(u)} \left\langle \frac{\partial}{\partial x^k}, \frac{\partial}{\partial x^\ell} \right\rangle \\
&= 1 + \frac{u^k u_k}{\cosh^2(u)} = 1 + \frac{|Du|^2}{\cosh^2(u)} = w^2.
\end{align*}
Therefore, the unit outward normal to $S(t)$ is
\begin{equation}\label{eqn:normal-vf-to-S_t}
\nu = \frac{1}{w} \left( \frac{\partial}{\partial r} - \frac{u^k}{\cosh^2(u)} \frac{\partial}{\partial x^k} \right).
\end{equation}
Taking the inner product of the basis vectors with $\nu$ yields the relation between the angle function $\Theta(x,t)$ (see \eqref{eqn:angle-function}) with the gradient function $w$:
\begin{align}
\Theta = \left\langle \frac{\partial}{\partial r},\nu\right\rangle&=\frac{1}{w}\,, \label{eqn:angle-gradient}\\
\left\langle \frac{\partial}{\partial x^i},\nu\right\rangle &= \frac{1}{w}\left\langle \frac{\partial}{\partial x^i}, \frac{\partial}{\partial r} - \frac{u^k} {\cosh^2(u)}\frac{\partial}{\partial x^k} \right\rangle \label{eqn:angle-gradient2} \\
&= -\frac{1}{w\cosh^2(u)} u^k \cosh^2(u)(g_0)_{ik} = -\frac{u_i}{w}.\notag
\end{align}

Now we evaluate \eqref{eqn:2ndFF}: $h_{ij} = -\langle \bar\nabla_{F_i}F_j,\nu\rangle$ term by term using \eqref{eqn:2ndFF-1} and \eqref{eqn:normal-vf-to-S_t}. For the symmetric mixed terms, by \eqref{eqn:angle-gradient2}, we have
\begin{align*}
&-\left\langle u_i\tanh(u)\frac{\partial}{\partial x^j} +u_j\tanh(u)\frac{\partial}{\partial x^i}, \nu \right\rangle \\
=& -u_i\tanh(u)\left(-\frac{u_j}{w}\right) - u_j\tanh(u)\left(-\frac{u_i}{w}\right) 
= \frac{2\tanh(u)u_i u_j}{w}.
\end{align*}
Next note that we have $D_{\frac{\partial}{\partial x^i}}\frac{\partial}{\partial x^j} = (\Gamma_0)_{ij}^k \frac{\partial}{\partial x^k}$. Therefore, along with \eqref{eqn:angle-gradient} and \eqref{eqn:angle-gradient2}, we obtain:
\begin{align*}
-\left\langle D_{\frac{\partial}{\partial x^i}}\frac{\partial}{\partial x^j} + (\partial_i u_j)\frac{\partial}{\partial r}, \nu \right\rangle &= -(\Gamma_0)_{ij}^k\left(-\frac{u_k}{w}\right) - (\partial_i u_j)\left(\frac{1}{w}\right) \\
&= -\frac{1}{w}\left(\partial_i u_j - (\Gamma_0)_{ij}^k u_k\right) = -\frac{u_{ij}}{w}.
\end{align*}
Finally, for the remaining normal term we have
\[
-\left\langle -\cosh(u)\sinh(u)(g_0)_{ij}\frac{\partial}{\partial r}, \nu \right\rangle = \frac{\cosh(u)\sinh(u)(g_0)_{ij}}{w}.
\]
Combining these yields:
\begin{equation}\label{eqn:2ndFF-2}
h_{ij} = \frac{1}{w} \left( \cosh(u)\sinh(u)(g_0)_{ij} + 2\tanh(u)u_i u_j - u_{ij} \right).
\end{equation}
Raising the indices twice, we get
\begin{align*}
h^{ij}&=g^{ik}g^{j\ell}h_{k\ell} \\
&=\frac{1}{w}\left(\cosh(u)\sinh(u)g^{ik}g^{j\ell}(g_0)_{k\ell}+2\tanh(u)(g^{ik}u_k)(g^{j\ell}u_\ell)-g^{ik}g^{j\ell}u_{k\ell}\right).
\end{align*}
For the first term:
\begin{align*}
g^{ik}g^{j\ell}(g_0)_{k\ell} &= g^{ik}\frac{1}{\cosh^2(u)}\left(\delta^j_k - \frac{u^j u_k}{w^2\cosh^2(u)}\right) \\
&= \frac{1}{\cosh^2(u)}g^{ij} - \frac{u^j}{w^2\cosh^4(u)}(g^{ik}u_k) \\
&= \frac{1}{\cosh^2(u)}g^{ij} - \frac{u^i u^j}{w^4\cosh^6(u)}.
\end{align*}
Note that we previously established that $g^{ik}u_k = \frac{u^i}{w^2\cosh^2(u)}$ (see \eqref{eqn:alpha-j}).
Substituting these back, we have
\begin{equation}
\begin{aligned}[b]
h^{ij}&=\frac{1}{w}\Bigg[\cosh(u)\sinh(u)\left(\frac{1}{\cosh^2(u)}g^{ij}-\frac{u^i u^j}{w^4\cosh^6(u)}\right)\\
&\qquad + 2\tanh(u)\frac{u^i}{w^2\cosh^2(u)}\frac{u^j}{w^2\cosh^2(u)} - g^{ik}g^{j\ell}u_{k\ell}\Bigg] \\
&=\frac{1}{w}\left[\tanh(u)g^{ij}-\frac{\tanh(u)}{w^4\cosh^4(u)}u^i u^j+\frac{2\tanh(u)}{w^4\cosh^4(u)}u^i u^j-g^{ik}g^{j\ell}u_{k\ell}\right] \\
&=\frac{1}{w}\left[\tanh(u)g^{ij}+\frac{\tanh(u)}{w^4\cosh^4(u)}u^i u^j-g^{ik}g^{j\ell}u_{k\ell}\right].
\end{aligned}
\end{equation}
Now the mean curvature is
\begin{align*}
H&=g^{ij}h_{ij}  \\
&=\frac{1}{w}\left(\cosh(u)\sinh(u)g^{ij}(g_0)_{ij}+2\tanh(u)g^{ij}u_i u_j-g^{ij}u_{ij}\right).
\end{align*}
Since $\Sigma$ is a 2-dimensional surface, we have $(g_0)^{ij}(g_0)_{ij} = 2$. Therefore,
\begin{align*}
g^{ij}(g_0)_{ij}&=\frac{1}{\cosh^2(u)}\left((g_0)^{ij}(g_0)_{ij}-\frac{u^i u^j (g_0)_{ij}}{w^2\cosh^2(u)}\right) \\
&=\frac{1}{\cosh^2(u)}\left(2-\frac{|Du|^2}{w^2\cosh^2(u)}\right).
\end{align*}
Using $|Du|^2 = \cosh^2(u)(w^2-1)$, this simplifies to:
\begin{equation}\label{eqn:mismatchedgs}
g^{ij}(g_0)_{ij}=\frac{1}{\cosh^2(u)}\left(2-\frac{w^2-1}{w^2}\right)=\frac{1}{\cosh^2(u)}\left(1+\frac{1}{w^2}\right).   
\end{equation}
We also have
\begin{align}\label{eqn:guiuj}
\notag g^{ij}u_i u_j&=\frac{1}{\cosh^2(u)}\left((g_0)^{ij}u_i u_j-\frac{u^i u^j u_i u_j}{w^2\cosh^2(u)}\right) \\
&=\frac{1}{\cosh^2(u)}\left(|Du|^2-\frac{|Du|^4}{w^2\cosh^2(u)}\right) \notag \\
&=(w^2-1)-\frac{(w^2-1)^2}{w^2} =\frac{w^2-1}{w^2}.
\end{align}
Therefore, 
\begin{align*}
H&=\frac{1}{w}\left[\cosh(u)\sinh(u)\frac{1}{\cosh^2(u)}\left(1+\frac{1}{w^2}\right)+2\tanh(u)\frac{w^2-1}{w^2}-g^{ij}u_{ij}\right] \\
&=\frac{1}{w}\left[\tanh(u)\left(1+\frac{1}{w^2}\right)+2\tanh(u)\left(1-\frac{1}{w^2}\right)-g^{ij}u_{ij}\right] \\
&=\frac{1}{w}\left[\tanh(u)\left(3-\frac{1}{w^2}\right)-g^{ij}u_{ij}\right].
\end{align*}
Now the evolution equation of $u$ is 
\begin{align}\label{eqn:u_t}
\frac{\partial u}{\partial t}&=-(H-\sigma)\Theta \notag\\
&=-\frac{H-\sigma}{w} \notag\\
&=\frac{1}{w^2}g^{ij}u_{ij}-\frac{\tanh(u)}{w^2}\left(3-\frac{1}{w^2}\right)+\frac{\sigma}{w}.
\end{align}
This is a degenerate second order quasi-linear parabolic PDE. Therefore a uniform upper bound on $w$ implies the angle function $\Theta \in (0, 1]$ everywhere (see \eqref{eqn:angle-function} and \eqref{eqn:angle-gradient}) and the evolving surface $S(t)$ exists for all time and can be represented globally by the height function $u(\cdot,t)$ over the reference surface $\Sigma$, see also Remark \ref{rmk:uniform-higher-order-bound}.

The following lemma serves as the Squeezing Lemma (analogous to \cite[Theorem 3.1]{HLZ2020}) for the modified mean curvature flow, see also \cite[Theorem 4.3]{HLZ2023}.
\begin{lemma}[Height Estimate]\label{lem:height_bound}
Let $u \in C^{2,1}(\Sigma \times [0,T))$ be a smooth solution to the modified mean curvature flow equation \eqref{eqn:u_t} with constant $\sigma \in (-2,2)$ and initial height satisfying $r_{min} \le u(\cdot, 0) \le r_{max}$. Then, as long as the modified mean curvature flow exists, the height function $u(\cdot, t)$ of the evolving surface satisfies:
\begin{equation}
r_-(t) \le u(\cdot, t) \le r_+(t),
\end{equation}
where $r_-(t)$ and $r_+(t)$ are the solutions to the ODE
\begin{equation} \label{ODE_barrier}
\frac{dr}{dt} = \sigma - 2\tanh(r(t)),
\end{equation}
with initial conditions $r_-(0) = r_{min}$ and $r_+(0) = r_{max}$, respectively. In particular, we have
\begin{equation}
|r_\pm(t) - r_\sigma| \le M e^{- \frac{4-\sigma^2}{2} t},
\end{equation}
for all $t \ge 0$, where $r_\sigma := \tanh^{-1}\left(\frac{\sigma}{2}\right)$ and $M > 0$ is a constant depending only on $\sigma, r_{min},$ and $r_{max}$. 
\end{lemma}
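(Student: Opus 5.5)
The plan is to compare $u$ with spatially constant barriers via the parabolic maximum principle, and then to analyze the ODE \eqref{ODE_barrier} directly.

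\textbf{Step 1: slices are exact solutions.} If $u(x,t)=r(t)$ is independent of $x$, then $Du=0$, so $w=1$ and $u_{ij}=0$. In this case \eqref{eqn:u_t} reduces to
\[
r'=-\tanh(r)(3-1)+\sigma=\sigma-2\tanh r,
\]
which is precisely \eqref{ODE_barrier}. Hence the slices $\Sigma(r_\pm(t))$ are solutions of the flow.

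\textbf{Step 2: comparison.} For the upper bound, set $\phi=u-r_+(t)$ and consider $m(t)=\max_\Sigma \phi(\cdot,t)$; compactness of $\Sigma$ is used here. At a spatial maximum point of $u(\cdot,t)$ we have $Du=0$, so $w=1$, and $D^2u\le 0$. Since $g^{ij}$ is positive definite, $g^{ij}u_{ij}\le 0$ there. Evaluating \eqref{eqn:u_t} at such a point gives
\[
\partial_t u\le \sigma-2\tanh(u).
\]
Therefore, in the sense of the upper Dini derivative (via Hamilton's trick),
\[
m'(t)\le -2\big(\tanh(u)-\tanh(r_+)\big).
\]
Whenever $m>0$ the right-hand side is negative, because $\tanh$ is increasing. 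Since $m(0)\le 0$, it follows that $m(t)\le 0$ for all $t$. For a fully rigorous version, I would compare against the perturbed barriers $r_+^\varepsilon$ with initial value $r_{max}+\varepsilon$ (or add $\varepsilon t$) to obtain strict inequality at a first touching time, and then let $\varepsilon\to0$. The lower bound is symmetric, using a minimum point of $u$ and $r_-(t)$.

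\textbf{Step 3: ODE decay.} The function $f(r)=\sigma-2\tanh r$ is strictly decreasing and has the unique zero $r_\sigma$ because $|\sigma|<2$. Consequently $r_\pm(t)$ are monotone and stay between their initial value and $r_\sigma$. In particular they lie in the fixed compact interval
\[
I=[\min(r_{min},r_\sigma),\,\max(r_{max},r_\sigma)].
\]
Let $e=r-r_\sigma$. The mean value theorem gives
\[
e'=-2\big(\tanh r-\tanh r_\sigma\big)=-2\,\mathrm{sech}^2(\xi)\,e
\]
for some $\xi$ between $r$ and $r_\sigma$, hence $\xi\in I$. This immediately yields exponential decay at rate $2\min_I \mathrm{sech}^2$.

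The main obstacle is obtaining the precise rate $\frac{4-\sigma^2}{2}=2\,\mathrm{sech}^2(r_\sigma)$ with a constant $M$ that depends only on the data. The plan is a two-phase argument:
\begin{enumerate}[(i)]
\item Use the crude rate $c_0=2\min_I\mathrm{sech}^2$ to get $|e(t)|\le C e^{-c_0 t}$.
\item Write $\mathrm{sech}^2\xi=\mathrm{sech}^2 r_\sigma+O(|e|)$. Then $\log|e|$ satisfies
\[
\frac{d}{dt}\log|e|=-2\,\mathrm{sech}^2 r_\sigma+O(e^{-c_0t}).
\]
The error term is integrable in $t$, so integrating gives $|e(t)|\le M e^{-2\mathrm{sech}^2(r_\sigma) t}$, with $M$ depending only on $\sigma$, $r_{min}$ and $r_{max}$.
\end{enumerate}
Finally, $\mathrm{sech}^2 r_\sigma=1-\tanh^2 r_\sigma=1-\sigma^2/4$, so $2\,\mathrm{sech}^2 r_\sigma=\frac{4-\sigma^2}{2}$, which is exactly the exponent in the statement.
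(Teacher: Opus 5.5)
Your proof is correct. Its skeleton matches the paper's: the slices $\Sigma(r_\pm(t))$ are exact solutions, you compare $u$ with them, and then you analyze the ODE. Both halves, however, are carried out differently.

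For the comparison, the paper simply cites the avoidance principle for the modified flow. You instead prove the needed statement directly from the graphical equation \eqref{eqn:u_t}: at a spatial maximum of $u-r_+(t)$ one has $Du=0$, hence $w=1$, $g^{ij}u_{ij}\le 0$ and $u_t\le\sigma-2\tanh u$, and Hamilton's trick finishes. This makes the lemma self-contained.

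For the decay, the paper integrates the ODE in closed form. Since $\sigma-2\tanh r=-2\sinh(r-r_\sigma)/(\cosh r_\sigma\cosh r)$, separating variables gives the first integral $|\sinh(r_\pm-r_\sigma)|\,e^{\frac{\sigma}{2}r_\pm}=C_\pm e^{-\frac{4-\sigma^2}{2}t}$. The sharp rate and an explicit $M$ then follow from $|\sinh x|\ge|x|$ and the boundedness of $e^{-\frac{\sigma}{2}r_\pm}$. Your two-stage linearization (crude rate $c_0=2\min_I\mathrm{sech}^2$, then an integrable $O(e^{-c_0t})$ error in $\frac{d}{dt}\log|e|$) does not use this special identity. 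It therefore works verbatim for any autonomous $r'=f(r)$ with $f$ decreasing and $C^{1,1}$ near a simple zero, yielding the linearized rate $|f'(r_\sigma)|$. The price is a much cruder constant: your $M$ carries a factor $\exp(C|e(0)|/c_0)$, and $c_0$ is exponentially small when $|r_{min}|$ or $|r_{max}|$ is large. This is still admissible for the lemma as stated, since $M$ depends only on $\sigma, r_{min}, r_{max}$.

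The only detail you omit is the case $e(0)=0$, where $\log|e|$ is undefined but $e\equiv 0$ by uniqueness. For $e(0)\neq 0$, uniqueness also guarantees $e(t)\neq 0$ for all $t$, so your logarithmic computation is legitimate.
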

\begin{proof}
The equidistant surfaces $\Sigma(r)$ of the Fuchsian manifold $M^3$ from the central reference surface $\Sigma$ form a global foliation of $M^3$. Each fiber $\Sigma(r)$ is totally umbilic with constant principal curvatures equal to $\tanh(r)$.

If we evolve an equidistant surface $\Sigma(r_0)$ under the modified mean curvature flow with $\sigma\in (-2,2)$, it remains an equidistant surface $\Sigma(r(t))$ for all $t \ge 0$, and its height $r(t)$ evolves according to the ODE:
\begin{equation}\label{eqn:ODE-barrier}
\frac{dr}{dt} = -(H - \sigma) = \sigma - 2\tanh(r(t)).
\end{equation}

By assumption, the initial surface $S_0$ lies in between two equidistant surfaces $\Sigma(r_{min})$ and $\Sigma(r_{max})$. By the avoidance principle for the modified mean curvature flow, the evolving surface $S(t)$ must remain in between the evolving surfaces $\Sigma(r_-(t))$ and $\Sigma(r_+(t))$ for as long as the flow exists. This yields the uniform height bound $r_-(t) \le u(\cdot, t) \le r_+(t)$. 

Solving \eqref{eqn:ODE-barrier} explicitly, we have
\begin{equation}
|\sinh(r_\pm(t) - r_\sigma)| e^{\frac{\sigma}{2} r_\pm(t)} = C_\pm e^{- \frac{4-\sigma^2}{2} t},
\end{equation}
where $C_\pm = |\sinh(r_\pm(0) - r_\sigma)| e^{\frac{\sigma}{2} r_\pm(0)}$ are constants determined by the initial conditions. Because the flows of equidistant surfaces are monotonic, $r_\pm(t)$ is bounded between its initial value and $r_\sigma$. Consequently, the factor $e^{-\frac{\sigma}{2} r_\pm(t)}$ is uniformly bounded. This implies that as $t \to \infty$, the term $|\sinh(r_\pm(t) - r_\sigma)|$ decays exponentially to zero. Since there exists a constant $c > 0$ such that $|\sinh(x)| \ge c|x|$ for $x$ in a bounded interval, we have
\begin{equation}\notag
|r_\pm(t) - r_\sigma| \le M e^{- \frac{4-\sigma^2}{2} t},
\end{equation}
for all $t \ge 0$, where $M > 0$ is a constant depending only on the initial data.
\end{proof}

\section{Linearized Operator}\label{sec:linearized-operator}

In this section, movtivated by the work of the second author and Xiao \cite{LX12}, we linearize the evolution equation for the height function $u \in C^{2,1}(\Sigma \times [0,T])$. The resulting linearized operator will be applied to the gradient function $w$ in the subsequent section to establish a time-dependent gradient bound for $u$. From \eqref{eqn:u_t}, $u$ satisfies a nonlinear parabolic equation of the form
\begin{equation}\label{eqn:flow-eqn}
\mathcal{P}[u] = \frac{\partial u}{\partial t} - \mathcal{F}(D^2 u, Du, u) = 0,
\end{equation}
where the operator $\mathcal{F}$ is given by
\begin{equation}\label{eqn:operator-F}
\mathcal{F}(D^2 u, Du, u) = \frac{1}{w^2}g^{ij}u_{ij} - \frac{\tanh(u)}{w^2}\left(3-\frac{1}{w^2}\right) + \frac{\sigma}{w}.
\end{equation}

To define the linearized operator, we introduce a one-parameter family of variations $u_\epsilon = u + \epsilon \phi$, where $\phi \in C^\infty(\Sigma)$, and compute the Fréchet derivative
\[
\mathcal{L}_u[\phi] = \left.\frac{d}{d\epsilon}\right|_{\epsilon=0} \mathcal{P}[u+\epsilon\phi].
\]
Since the covariant derivative is linear, the variations of $Du$ and $D^2 u$ are given by $D\phi$ and $D^2 \phi$, respectively. Because $\mathcal{F}$ depends on the gradient $Du$ and the function $u$ through the highly nonlinear composite terms $w$ and $g^{ij}$, we define the background-dependent geometric coefficients via partial derivatives:
\[
\alpha^{ij} := \frac{\partial \mathcal{F}}{\partial u_{ij}}, 
\qquad
\beta^k := \frac{\partial \mathcal{F}}{\partial u_k}, 
\qquad
\gamma := \frac{\partial \mathcal{F}}{\partial u},
\]
so that the \textit{linearized operator} takes the precise form
\[
\mathcal{L}_u[\phi]
=
\frac{\partial \phi}{\partial t}
- \alpha^{ij} D_i D_j \phi
- \beta^k D_k \phi
- \gamma \phi.
\]
Here, it is clear that the principal symbol is $$\alpha^{ij}= \frac{1}{w^2}g^{ij}.$$ This operator is parabolic because $\alpha^{ij}(x,t) = \frac{1}{w^2(x,t)}g^{ij}(x,t)$ is strictly positive definite on $\Sigma \times [0,T)$. The drift vector $\beta^k = \frac{\partial \mathcal{F}}{\partial u_k}$ can be computed explicitly using \eqref{eqn:operator-F}. Using the substitution 
\begin{equation}\label{eqn:rho}
    \rho \equiv w^2\cosh^2(u) = \cosh^2(u) + |Du|^2,
\end{equation}
we can write $\alpha^{ij}$ as:
\begin{equation}\label{eqn:alpha-ij}
\alpha^{ij} = \frac{1}{\rho} (g_0)^{ij} - \frac{1}{\rho^2} u^i u^j.
\end{equation}
Since $\frac{\partial \rho}{\partial u_k} = 2u^k$ and $u^i = (g_0)^{ik} u_k$, we compute:
\begin{align*}
\frac{\partial \alpha^{ij}}{\partial u_k} &= - \frac{1}{\rho^2} \frac{\partial \rho}{\partial u_k} (g_0)^{ij} - \frac{1}{\rho^2} \frac{\partial (u^i u^j)}{\partial u_k} + \frac{2}{\rho^3} \frac{\partial \rho}{\partial u_k} u^i u^j \\
&= - \frac{2u^k}{\rho^2} (g_0)^{ij} - \frac{(g_0)^{ik}u^j + u^i(g_0)^{jk}}{\rho^2} + \frac{4u^k}{\rho^3} u^i u^j.
\end{align*}
Contracting this with $u_{ij}$ and substituting $\rho = w^2\cosh^2(u)$ back in yields:
\begin{equation}\label{eqn:alpha_u_k}
\frac{\partial \alpha^{ij}}{\partial u_k} u_{ij} = - \frac{2u^k}{w^4\cosh^4(u)} (g_0)^{ij} u_{ij} - \frac{2(g_0)^{ik} u^j u_{ij}}{w^4\cosh^4(u)} + \frac{4u^k u^i u^j u_{ij}}{w^6\cosh^6(u)}.
\end{equation}
For the lower-order terms in $\mathcal{F}$, we use $\frac{\partial w}{\partial u_k} = \frac{u^k}{w\cosh^2(u)}$. We compute:
\begin{align}\label{eqn:remainder_F_u_k}
&\frac{\partial}{\partial u_k} \left[ - \tanh(u)\left(\frac{3}{w^2} - \frac{1}{w^4}\right) + \frac{\sigma}{w} \right] \\
= & - \tanh(u) \left( -\frac{6}{w^3} + \frac{4}{w^5} \right) \frac{u^k}{w\cosh^2(u)} - \frac{\sigma}{w^2} \frac{u^k}{w\cosh^2(u)} \notag\\
= & \frac{2\tanh(u)u^k}{w^4\cosh^2(u)}\left(3 - \frac{2}{w^2}\right) - \frac{\sigma u^k}{w^3\cosh^2(u)}.\notag
\end{align}
Combining \eqref{eqn:alpha_u_k} and \eqref{eqn:remainder_F_u_k}, we have
\begin{align}\label{eqn:beta-k}
\beta^k = & \frac{\partial }{\partial u_k}\left(\frac{1}{w^2}g^{ij}u_{ij} - \frac{\tanh(u)}{w^2}\left(3-\frac{1}{w^2}\right) + \frac{\sigma}{w}\right) \\
= & - \frac{2u^k}{w^4\cosh^4(u)} (g_0)^{ij} u_{ij} - \frac{2(g_0)^{ik} u^j u_{ij}}{w^4\cosh^4(u)} + \frac{4u^k u^i u^j u_{ij}}{w^6\cosh^6(u)} \notag\\
& + \frac{2\tanh(u)u^k}{w^4\cosh^2(u)}\left(3 - \frac{2}{w^2}\right) - \frac{\sigma u^k}{w^3\cosh^2(u)}.\notag
\end{align}
The zeroth-order term $\gamma = \frac{\partial \mathcal{F}}{\partial u}$ can be computed similarly. Using \eqref{eqn:rho}, \eqref{eqn:alpha-ij} and $\frac{\partial \rho}{\partial u} = 2\cosh(u)\sinh(u) = 2\tanh(u)\cosh^2(u)$, we have
\begin{align*}
\frac{\partial \left(\frac{1}{w^2}g^{ij}\right)}{\partial u} = \frac{\partial \alpha^{ij}}{\partial u} &= - \frac{1}{\rho^2} (2\tanh(u)\cosh^2(u)) (g_0)^{ij} + \frac{2}{\rho^3} (2\tanh(u)\cosh^2(u)) u^i u^j \\
&= - \frac{2\tanh(u)}{w^4\cosh^2(u)} (g_0)^{ij} + \frac{4\tanh(u)}{w^6\cosh^4(u)} u^i u^j.
\end{align*}
Contracting with $u_{ij}$ yields
\[
\frac{\partial \alpha^{ij}}{\partial u} u_{ij} = - \frac{2\tanh(u)}{w^4\cosh^2(u)} (g_0)^{ij} u_{ij} + \frac{4\tanh(u)}{w^6\cosh^4(u)} u^i u^j u_{ij}.
\]
We now differentiate the lower-order terms of $\mathcal{F}$ in \eqref{eqn:operator-F} with respect to $u$. Since $\frac{\partial (w^2)}{\partial u} = \frac{\partial}{\partial u} \left( 1 + |Du|^2 \cosh^{-2}(u) \right) = -2\tanh(u)(w^2-1)$, we have $\frac{\partial w}{\partial u} = - \frac{\tanh(u)(w^2-1)}{w}$. We compute
\begin{align*}
&\frac{\partial}{\partial u} \left[ - \tanh(u)\left(\frac{3}{w^2} - \frac{1}{w^4}\right) \right] \\
= & - \text{sech}^2(u)\left(\frac{3}{w^2} - \frac{1}{w^4}\right) - \tanh(u) \left( -\frac{6}{w^3} + \frac{4}{w^5} \right) \left( - \frac{\tanh(u)(w^2-1)}{w} \right) \\
=& - \text{sech}^2(u)\left(\frac{3}{w^2} - \frac{1}{w^4}\right) - \frac{2\tanh^2(u)(w^2-1)}{w^4} \left(3 - \frac{2}{w^2}\right).
\end{align*}
Differentiating the last term in $\mathcal{F}$ yields $\frac{\partial}{\partial u} (\sigma w^{-1}) = -\sigma w^{-2} \frac{\partial w}{\partial u} = \frac{\sigma \tanh(u)(w^2-1)}{w^3}$. 
Combining all these yields
\begin{align}\label{eqn:gamma}
\gamma =& - \frac{2\tanh(u)}{w^4\cosh^2(u)} (g_0)^{ij} u_{ij} + \frac{4\tanh(u)}{w^6\cosh^4(u)} u^i u^j u_{ij} \\
& - \text{sech}^2(u)\left(\frac{3}{w^2} - \frac{1}{w^4}\right) - \frac{2\tanh^2(u)(w^2-1)}{w^4} \left(3 - \frac{2}{w^2}\right) \notag\\
&+ \frac{\sigma\tanh(u)(w^2-1)}{w^3}.\notag
\end{align}

The derivation of the time-depedent gradient bounds will rely on the differential part of the linearized operator (comprising both the second-order and first-order terms) given by:
\begin{equation}\label{eqn:operator-scr-L}
\mathscr{L} := \frac{\partial}{\partial t} - \alpha^{ij} D_i D_j - \beta^k D_k,
\end{equation}
where the principal symbol $\alpha^{ij} = \frac{1}{w^2}g^{ij}$ guarantees uniform parabolicity as long as the gradient $w$ remains bounded, and the drift vector $\beta^k$ is determined by the background geometry and the Hessian $u_{ij}$ as derived above.

\section{Time-dependent Gradient Bound and Long-time Existence }\label{sec:time-dep-gra-bound}
In this section, we utilize the differential part of the linearized operator $\mathcal{L}_u$ derived in Section \ref{sec:linearized-operator}, namely, $\mathcal{L}$ in \eqref{eqn:operator-scr-L}, to establish a time-dependent gradient bound for the height function along the flow, which yields the long-time existence of the flow. We begin by computing the time derivative for the gradient function $w=\sqrt{1+\frac{|Du|^2}{\cosh^2 (u)}}$ along the flow.
\begin{align*}
\frac{\partial w}{\partial t}&= \frac{1}{2w}\frac{\partial}{\partial t}(w^2)\\
&= \frac{1}{2w}\frac{\partial}{\partial t} \left(1 + \frac{|Du|^2}{\cosh^2(u)}\right) \\
&= \frac{1}{2w}
\left[
\frac{1}{\cosh^2(u)}\frac{\partial}{\partial t}(|Du|^2)
+
|Du|^2 \frac{\partial}{\partial t}\left(\frac{1}{\cosh^2(u)}\right)
\right].
\end{align*}

Since
\[
|Du|^2 = u^k u_k = (g_0)^{ik} u_i u_k,
\]
by symmetry we have
\begin{align*}
\frac{\partial}{\partial t}(|Du|^2)
&= \frac{\partial}{\partial t}\left[(g_0)^{ik} u_i u_k\right] \\
&= 2 (g_0)^{ik} u_i \frac{\partial u_k}{\partial t} \\
&= 2 u^k D_k(u_t).
\end{align*}
Also,
\[
\frac{\partial}{\partial t}\left(\frac{1}{\cosh^2(u)}\right) = -\frac{2\tanh(u)}{\cosh^2(u)}\,u_t.
\]
Substituting these into the evolution of $w$, we obtain
\begin{align*}
\frac{\partial w}{\partial t}
&= \frac{1}{2w}
\left[ \frac{2 u^k}{\cosh^2(u)} D_k(u_t) - |Du|^2 \frac{2\tanh(u)}{\cosh^2(u)} u_t \right] \\
&= \frac{u^k}{w\cosh^2(u)} D_k(u_t) - \frac{1}{w}\frac{|Du|^2}{\cosh^2(u)} \tanh(u)\,u_t.
\end{align*}
Using $ \frac{|Du|^2}{\cosh^2(u)} = w^2 - 1$,
we conclude
\begin{align}\label{eqn:w_t with D_ku_t}
\frac{\partial w}{\partial t}&= \frac{u^k}{w\cosh^2(u)} D_k(u_t) - \frac{w^2 - 1}{w}\,\tanh(u)\,u_t .
\end{align}
Note that by \eqref{eqn:u_t}, $u_t=\frac{1}{w^2}g^{ij}u_{ij}-\frac{\tanh(u)}{w^2}\left(3-\frac{1}{w^2}\right)+\frac{\sigma}{w}$, so we have
\begin{align}\label{eqn:D_ku_t}
D_k(u_t)&= D_k\left(\frac{1}{w^2}g^{ij}u_{ij}\right)-D_k\left(\frac{\tanh(u)}{w^2}\left(3-\frac{1}{w^2}\right)\right)+ \sigma D_k\left(\frac{1}{w} \right).
\end{align}
We now compute each term separately. For the first term, we have
\[
D_k\left(\frac{1}{w^2} g^{ij} u_{ij}\right)=D_k\left(\frac{1}{w^2}\right) g^{ij} u_{ij} + \frac{1}{w^2} D_k(g^{ij})\, u_{ij} + \frac{1}{w^2} g^{ij} D_k(u_{ij}).
\]
Since
\begin{equation}\label{eqn: D-1-w}
D_k\left(\frac{1}{w^2}\right) = -\frac{2}{w^3} w_k \text{\quad and \quad }D_k(u_{ij})=u_{ijk} \ ,
\end{equation}
we obtain
\[
D_k\left( \frac{1}{w^2} g^{ij} u_{ij} \right) = -\frac{2}{w^3} w_k\, g^{ij} u_{ij} + \frac{1}{w^2} D_k(g^{ij})\, u_{ij} + \frac{1}{w^2} g^{ij} u_{ijk}.
\]
Next, noting that $D_k\left(\tanh(u)\right)=\text{sech}^2(u)\, u_k$ and using \eqref{eqn: D-1-w},
\begin{align*}
    D_k\left(\frac{\tanh(u)}{w^2}\left(3-\frac{1}{w^2}\right)\right)
    = & \text{sech}^2(u)\,u_k\,\frac{1}{w^2}\left(3-\frac{1}{w^2}\right) \\
    &
    +\tanh(u)\left(-\frac{2}{w^3}w_k\right)\left(3-\frac{1}{w^2}\right) \\
    &+\tanh(u)\,\frac{1}{w^2}\left(\frac{2}{w^3}w_k\right)\\
    = & \text{sech}^2(u)\,u_k\,\frac{1}{w^2}\left(3-\frac{1}{w^2}\right) - \frac{2\tanh(u)}{w^3}\left(3-\frac{2}{w^2}\right)w_k.
\end{align*}
Lastly we have $D_k\left(\frac{1}{w}\right)=-\frac{1}{w^2}w_k$. Substituting all of these into \eqref{eqn:D_ku_t}, we get
\begin{align}\label{eqn:D_ku_t with gij}
D_k(u_t)&= -\frac{2}{w^3} w_k\, g^{ij} u_{ij} + \frac{1}{w^2} D_k(g^{ij})\, u_{ij} + \frac{1}{w^2} g^{ij} u_{ijk} \\
&\quad \notag
- \text{sech}^2(u)\,\frac{1}{w^2}\left(3-\frac{1}{w^2}\right)u_k \\
&\quad \notag
+ \frac{2\tanh(u)}{w^3}\left(3-\frac{2}{w^2}\right)w_k - \frac{\sigma}{w^2}w_k.
\end{align}
We next compute $D_k(g^{ij})$. By \eqref{eqn:g-upper-ij}, namely, $g^{ij}=\text{sech}^2(u)\,(g_0)^{ij}-\text{sech}^{4}(u)\,\frac{u^i u^j}{w^2}$. Therefore,
\[
D_k(g^{ij})=(g_0)^{ij}D_k\left(\text{sech}^2(u)\right)-\frac{u^i u^j}{w^2}\,D_k\left(\text{sech}^4(u)\right)-\text{sech}^4(u)\,D_k\left(\frac{u^i u^j}{w^2}\right).
\]
Since $D_k\left(\text{sech}^2(u) \right)=-2\text{sech}^2(u)\tanh(u)\,u_k=-\frac{2\tanh (u)}{\cosh^2(u)}\,u_k$, we get $D_k\left(\text{sech}^4(u)  \right)=2\text{sech}^2(u)D_k\left(\text{sech}^2(u) \right)=-\frac{4\tanh (u)}{\cosh^4(u)}\,u_k$. On the other hand, we have
\begin{align*}
    D_k\left(\frac{u^i u^j}{w^2}\right)
    &=\frac{1}{w^2}D_k\left({u^i u^j}\right)+u^i u^j D_k\left(\frac{1}{w^2}\right)=\frac{u^i_k u^j + u^i u^j_k}{w^2}-\frac{2u^i u^j w_k}{w^3}.
\end{align*}
Therefore,
\begin{align*} 
D_k(g^{ij})
&= -2\tanh(u)\cosh^{-2}(u)\,u_k\,(g_0)^{ij}+\frac{4\tanh(u)}{w^2\cosh^{4}(u)}\,u_k\,u^i u^j  \\
&\quad -\frac{1}{w^2\cosh^{4}(u)}\,(u^i_k\,u^j + u^i u^j_k) +\frac{2}{w^3\cosh^{4}(u)}\,w_k\,u^i u^j .
\end{align*}
Substituting this into $\frac{1}{w^2}D_k(g^{ij})u_{ij}$ and contracting with $u_{ij}$ yields $u^i_k u^j u_{ij} + u^i u^j_k u_{ij} = 2u^i_k u^j u_{ij}$. Thus, equation \eqref{eqn:D_ku_t with gij} becomes:
\begin{align*}
D_k(u_t)&= -\frac{2}{w^3} w_k \, g^{ij} u_{ij} -\frac{2 \tanh(u)}{w^2 \cosh^{2}(u)} \, u_k \, (g_0)^{ij} u_{ij} \\
&\quad +\frac{4 \tanh(u)}{w^4 \cosh^{4}(u)} \, u_k \, u^i u^j u_{ij} -\frac{2}{w^4 \cosh^{4}(u)} \, u^i_k \, u^j u_{ij} \\
&\quad +\frac{2}{w^5 \cosh^{4}(u)} \, w_k \, u^i u^j u_{ij} +\frac{1}{w^2} g^{ij} D_k u_{ij} \\
&\quad -\text{sech}^2(u)\,\frac{1}{w^2}\left(3-\frac{1}{w^2}\right) u_k +\frac{2 \tanh(u)}{w^3}\left(3-\frac{2}{w^2}\right) w_k-\frac{\sigma}{w^2} w_k.
\end{align*}
Now we can put the expression on the right side and \eqref{eqn:u_t} into \eqref{eqn:w_t with D_ku_t}.
\begin{align}\label{eqn:w_t-new}
\frac{\partial w}{\partial t} &= \frac{u^k}{w\cosh^2(u)} \Bigg[ -\frac{2}{w^3} w_k \, g^{ij} u_{ij} -\frac{2 \tanh(u)}{w^2 \cosh^{2}(u)} \, u_k \, (g_0)^{ij} u_{ij} \\
&\qquad +\frac{4 \tanh(u)}{w^4 \cosh^{4}(u)} \, u_k \, u^i u^j u_{ij}-\frac{2}{w^4 \cosh^{4}(u)} \, u^i_k \, u^j u_{ij} \notag \\
&\qquad +\frac{2}{w^5 \cosh^{4}(u)} \, w_k \, u^i u^j u_{ij} +\frac{1}{w^2} g^{ij} D_k u_{ij} \notag \\
&\qquad -\text{sech}^2(u)\,\frac{1}{w^2}\left(3-\frac{1}{w^2}\right) u_k +\frac{2 \tanh(u)}{w^3}\left(3-\frac{2}{w^2}\right) w_k -\frac{\sigma}{w^2} w_k \Bigg] \notag \\
&\qquad -\frac{w^2 - 1}{w}\,\tanh(u) \left[ \frac{1}{w^2}g^{ij}u_{ij} -\frac{\tanh(u)}{w^2}\left(3-\frac{1}{w^2}\right) +\frac{\sigma}{w} \right].\notag
\end{align}

\subsection{Time-Dependent Exponential Gradient Bound}
\begin{lemma}[Gradient Evolution Inequality]\label{lem:w_evolution}
Let $u \in C^{2,1}(\Sigma \times [0,T))$ be a smooth solution to the modified mean curvature flow \eqref{eqn:u_t}. There exists a constant $C_0 > 0$, depending only on the background metric $g_0$ on $\Sigma$ and the $C^0$ bound of $u$ in Lemma \ref{lem:height_bound}, such that the gradient function $w$ satisfies the differential inequality:
\begin{equation}
\mathscr{L} w \le C_0 w - \frac{1}{2 w \cosh^2(u)} \alpha^{ij} (g_0)^{k\ell} u_{ik} u_{j\ell} + \frac{1}{w} \alpha^{ij} w_i w_j + 4\tanh(u) \alpha^{ij} u_i w_j,
\end{equation}
where $\mathscr{L}$ is the linear parabolic operator from \eqref{eqn:operator-scr-L}. In particular, we have
\begin{equation}\label{eqn:w_bound_linear}
\mathscr{L} w \le C_0 w+ \frac{1}{w} \alpha^{ij} w_i w_j + 4\tanh(u) \alpha^{ij} u_i w_j.
\end{equation}
\end{lemma}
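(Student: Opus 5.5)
We will compute $\mathscr{L}w = w_t - \alpha^{ij}D_iD_jw - \beta^kD_kw$ directly. The time derivative is already available in \eqref{eqn:w_t-new}. The spatial part comes from differentiating $w^2 = 1 + \cosh^{-2}(u)|Du|^2$ twice, and the two are then compared.

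\textbf{Step 1: first and second derivatives of $w$.} From $2ww_i = \cosh^{-2}(u)\,2u^ku_{ki} - 2\tanh(u)(w^2-1)u_i$ we get $w_i$. Differentiating once more gives
\begin{align*}
ww_{ij} + w_iw_j = {}& \cosh^{-2}(u)\left(u^k_{\ j}u_{ki} + u^ku_{kij}\right) \\
&- \text{(terms with $\tanh(u)$, $\text{sech}^2(u)$ times $u_iu_ku_{kj}$, $w^2u_iu_j$, $(w^2-1)u_{ij}$)}.
\end{align*}
Contracting with $\alpha^{ij}$, the term $\alpha^{ij}w_iw_j/w$ appears with a plus sign when we solve for $-\alpha^{ij}w_{ij}$. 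The positive quantity $\cosh^{-2}(u)\alpha^{ij}(g_0)^{k\ell}u_{ik}u_{j\ell}/w$ appears with a minus sign in $-\alpha^{ij}w_{ij}$. This is the good Hessian term.

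\textbf{Step 2: third derivatives.} In $w_t$, the third-order term is $\frac{u^k}{w^3\cosh^2(u)}g^{ij}u_{ijk} = \frac{u^k}{w\cosh^2(u)}\alpha^{ij}u_{ijk}$. We commute $u_{ijk}$ to $u_{kij}$ using the Ricci identity on $(\Sigma,g_0)$, which has curvature $-1$. The commutator $u_{kij}-u_{ijk}$ is a curvature term linear in $Du$. Contracted with $u^k\alpha^{ij}/(w\cosh^2(u))$, it is bounded by $C|Du|^2/(w^3\cosh^4(u)) \le Cw$ after using $\alpha^{ij}\le \rho^{-1}(g_0)^{ij}$. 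The remaining third-order term then cancels exactly against the one from $-\alpha^{ij}w_{ij}$.

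\textbf{Step 3: the $\beta^k$ terms.} By construction, $\beta^k$ collects exactly the $\partial_{u_k}$-derivatives of $\mathcal{F}$. Rewriting $u^iu_{ik}$ via $w_k$ (from Step 1), the terms in \eqref{eqn:w_t-new} of the form $w_k\,g^{ij}u_{ij}$, $w_k\,u^iu^ju_{ij}$, $u^k u^i_k u^ju_{ij}$ and $\sigma w_k$ should combine with $-\beta^kw_k$. They either cancel or leave pieces of the form $\tanh(u)\alpha^{ij}u_iw_j$; we expect the constant $4$ to emerge from the coefficient $2\tanh(u)(3-2/w^2)$ together with the mixed terms of Step 1.

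\textbf{Step 4: the remainder and the main obstacle.} What remains consists of lower-order terms, each bounded by $C_0w$ using the $C^0$ bound from Lemma \ref{lem:height_bound}. The exceptions are terms mixing $\tanh(u)$, $u_k$ and a single Hessian factor, such as $\tanh(u)u_k(g_0)^{ij}u_{ij}/(w^3\cosh^4(u))$. These are the main difficulty, because they are not immediately bounded. We plan to absorb them by Young's inequality into half of the good term $w^{-1}\cosh^{-2}(u)\alpha^{ij}(g_0)^{k\ell}u_{ik}u_{j\ell}$, which is the reason for the factor $\frac12$ in the statement. The cost is $C|Du|^2/(w\rho)\le Cw$. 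Here we must check that, after contracting with $\alpha^{ij}$, the weights $\rho^{-1}$ match those of the good term. The "in particular" inequality \eqref{eqn:w_bound_linear} then follows by dropping the nonpositive Hessian term.
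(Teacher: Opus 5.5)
Your outline follows the paper's route: direct computation of $\mathscr{L}w$, the Ricci identity for the third-order terms, pairing $\frac{u^k}{w\cosh^2(u)}\beta^\ell u_{\ell k}$ with $-\beta^k w_k$, and absorption into $\frac12\mathcal{Q}$. However, Step 4, which carries the real content of the lemma, does not work as you describe it. The good term does not control all second derivatives with the same weight. The matrix $\alpha^{ij}$ has eigenvalue $\frac{1}{w^4\cosh^2(u)}$ in the direction of $Du$ and $\frac{1}{w^2\cosh^2(u)}$ orthogonal to it. So in a $g_0$-orthonormal frame with $e_1=Du/|Du|$,
\[
\mathcal{Q}=\frac{u_{11}^2+u_{12}^2}{w^5\cosh^4(u)}+\frac{u_{12}^2+u_{22}^2}{w^3\cosh^4(u)} .
\]
Your example term, after contraction with $u^k$, is $-\frac{2\tanh(u)|Du|^2}{w^3\cosh^4(u)}(g_0)^{ij}u_{ij}=-\frac{2\tanh(u)(w^2-1)}{w^3\cosh^2(u)}(u_{11}+u_{22})$. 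Its coefficient on $u_{11}$ is of order $\frac{1}{w\cosh^2(u)}$, so Young's inequality against $\frac{u_{11}^2}{w^5\cosh^4(u)}$ costs order $w^3$, not $Cw$. The same happens for each term containing $u^iu^ju_{ij}=|Du|^2u_{11}$, for instance $-\frac{2\tanh(u)(w^2-1)}{w^5\cosh^4(u)}u^iu^ju_{ij}$, which is left over from the $\beta$-pairing. Trading that $u^iu^ju_{ij}$ for $u^jw_j$ via the formula for $w_k$ does not help either: it produces $-2\tanh(u)(w^2-1)\alpha^{ij}u_iw_j$, whose coefficient grows like $w^2$ and is incompatible with the stated $4\tanh(u)\alpha^{ij}u_iw_j$. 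So the weight check you defer actually fails term by term.

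The missing idea is an exact cancellation that must happen before any estimate. First, the $\alpha^{ij}u_{ij}$ term coming from $-\frac{\tanh(u)}{w}(w^2-1)u_t$ cancels against the one in $-\alpha^{ij}w_{ij}$. After that, all remaining single-Hessian terms collapse to exactly $-\frac{2\tanh(u)(w^2-1)}{w}\alpha^{ij}u_{ij}$. In the paper these are the terms of $\frac{\gamma|Du|^2}{w\cosh^2(u)}+\beta^k\frac{\tanh(u)}{w}(w^2-1)u_k$; see \eqref{eqn:L_w_exact_identity}. In this combination the $u_{11}$-coefficient is smaller by a factor $w^{-2}$, and only this combination is absorbable. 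The trace inequality and $\lambda_{\max}(\alpha)=\frac{1}{w^2\cosh^2(u)}$ give $(\alpha^{ij}u_{ij})^2\le 2\alpha^{ik}\alpha^{j\ell}u_{ij}u_{k\ell}\le \frac{2}{w}\mathcal{Q}$, hence $\frac{2(w^2-1)}{w}|\alpha^{ij}u_{ij}|\le \frac{\mathcal{Q}}{2}+4w$.

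A smaller point concerns the constant $4$. The pairing $\frac{u^k}{w\cosh^2(u)}\beta^\ell u_{\ell k}-\beta^kw_k=\beta^k\frac{\tanh(u)}{w}(w^2-1)u_k$ leaves no $w_k$ at all, so the coefficient $2\tanh(u)(3-2/w^2)$ plays no role in the $4$. Instead, the $4$ comes entirely from the three mixed terms of $w_{ij}$ after substituting $u^\ell u_{\ell i}=w\cosh^2(u)w_i+\tanh(u)\cosh^2(u)(w^2-1)u_i$, namely $(1-w^{-2})+2+(1+w^{-2})=4$.
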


\begin{proof}
To compute $w_k$, we differentiate $w^2 = 1+\frac{u^\ell u_\ell}{\cosh^2 (u)}$:
\begin{align}
2ww_k &= \frac{2u^\ell u_{\ell k }}{\cosh^2 (u)}-2\tanh(u) \frac{u^\ell u_\ell}{\cosh^2 (u)}u_k \notag\\
w_k &= \frac{u^\ell u_{\ell k }}{w\cosh^2 (u)}-\frac{\tanh(u)}{w} \frac{u^\ell u_\ell}{\cosh^2 (u)}u_k \notag\\
&= \frac{u^\ell u_{\ell k }}{w\cosh^2 (u)}-\frac{\tanh(u)}{w} (w^2-1)u_k. \label{eqn:w_k}
\end{align}
Differentiating once more yields the Hessian $w_{ij} = D_j w_i$:
\begin{align*}
w_{ij} &= D_j\left( \frac{u^\ell u_{\ell i}}{w\cosh^2(u)} \right) - D_j\left( \frac{\tanh(u)}{w}(w^2-1)u_i \right).
\end{align*}
The first term gives:
\begin{align*}
D_j\left( \frac{u^\ell u_{\ell i}}{w\cosh^2(u)} \right) &= \left[-\frac{w_j}{w^2\cosh^2(u)} -\frac{2\tanh(u)}{w\cosh^2(u)}u_j\right]u^\ell u_{\ell i} + \frac{1}{w\cosh^2(u)} (u^\ell_j u_{\ell i} + u^\ell D_j u_{\ell i}).
\end{align*}
Expanding the second term yields:
\begin{align*}
D_j\left( \frac{\tanh(u)}{w}(w^2-1)u_i \right) &= \left(\frac{\text{sech}^2(u)}{w}u_j -\frac{\tanh(u)}{w^2}w_j \right)(w^2-1)u_i \\
&\quad +\frac{\tanh(u)}{w}\,u_i (2ww_j) +\frac{\tanh(u)}{w}(w^2-1)u_{ij}\\
&= \left(\frac{w^2-1}{w}\right)\text{sech}^2(u)\,u_iu_j +\left(\frac{w^2+1}{w^2}\right)\tanh(u)\, u_iw_j \\
&\quad +\frac{\tanh(u)}{w}(w^2-1)u_{ij}.
\end{align*}
Putting these together, we have
\begin{align}\label{eqn:w_ij}
w_{ij} &= \frac{1}{w\cosh^2(u)} \left(u^\ell_j u_{\ell i}+u^\ell D_j u_{\ell i}\right) -\frac{w_j}{w^2\cosh^2(u)}u^\ell u_{\ell i} -\frac{2\tanh(u)}{w\cosh^2(u)}u_j u^\ell u_{\ell i} \notag\\
&\quad -\frac{\text{sech}^2(u)}{w}(w^2-1)u_i u_j -\tanh(u)\left(1+\frac{1}{w^2}\right) u_iw_j -\frac{\tanh(u)}{w}(w^2-1)u_{ij}.
\end{align}

We now evaluate $\mathscr{L}w = \frac{\partial w}{\partial t} - \alpha^{ij}w_{ij} - \beta^k w_k$. Taking derivative of the flow equation \eqref{eqn:flow-eqn}: $u_t = \mathcal{F}(D^2u, Du, u)$ yields $D_k u_t = \alpha^{ij}D_k u_{ij} + \beta^\ell u_{\ell k} + \gamma u_k$, where $\gamma = \frac{\partial \mathcal{F}}{\partial u}$. Expanding $\frac{\partial w}{\partial t}$ using \eqref{eqn:w_t with D_ku_t} gives:
\begin{equation}
\frac{\partial w}{\partial t} = \frac{u^k}{w\cosh^2(u)} (\alpha^{ij}D_k u_{ij} + \beta^\ell u_{\ell k} + \gamma u_k) - \frac{\tanh(u)}{w}(w^2-1)u_t.
\end{equation}
We pair the drift term $\frac{u^k}{w\cosh^2(u)}\beta^\ell u_{\ell k}$ in $\frac{\partial w}{\partial t}$ with the term $-\beta^k w_k$ in $\mathscr{L}w$. By swapping the indices $k$ and $\ell$ and substituting \eqref{eqn:w_k}, we obtain:
\begin{equation}\notag
\frac{u^k}{w\cosh^2(u)}\beta^\ell u_{\ell k} - \beta^k w_k = \beta^k \left( \frac{u^\ell u_{\ell k}}{w\cosh^2(u)} - w_k \right) =  \beta^k \frac{\tanh(u)}{w}(w^2-1)u_k.
\end{equation}

Therefore,
$$
\mathscr{L}w =  \frac{u^k}{w\cosh^2(u)} (\alpha^{ij}D_k u_{ij} + \gamma u_k) - \frac{\tanh(u)}{w}(w^2-1)u_t - \alpha^{ij}w_{ij}+ \beta^k \frac{\tanh(u)}{w}(w^2-1)u_k.
$$

Next, we expand $-\alpha^{ij}w_{ij}$. Using again \eqref{eqn:w_k}, we have
\begin{equation}
u^\ell u_{\ell i} = w\cosh^2(u)w_i + \tanh(u)\cosh^2(u)(w^2-1)u_i.
\end{equation}
Substituting this into the mixed terms of $-\alpha^{ij}w_{ij}$ using \eqref{eqn:w_ij}  yields:
\begin{align*}
&\quad -\alpha^{ij} \left[ -\frac{w_j}{w^2\cosh^2(u)}u^\ell u_{\ell i} - \frac{2\tanh(u)}{w\cosh^2(u)}u_j u^\ell u_{\ell i} \right] \\
&= \frac{1}{w^2\cosh^2(u)} \alpha^{ij} w_j \left[ w\cosh^2(u)w_i + \tanh(u)\cosh^2(u)(w^2-1)u_i \right] \\
&\quad + \frac{2\tanh(u)}{w\cosh^2(u)} \alpha^{ij} u_j \left[ w\cosh^2(u)w_i + \tanh(u)\cosh^2(u)(w^2-1)u_i \right] \\
&= \frac{1}{w}\alpha^{ij}w_i w_j + \frac{\tanh(u)}{w^2}(w^2-1)\alpha^{ij}u_i w_j + 2\tanh(u)\alpha^{ij}u_j w_i + \frac{2\tanh^2(u)}{w}(w^2-1)\alpha^{ij}u_i u_j.
\end{align*}
The expansion of $-\alpha^{ij}w_{ij}$ also contains the term $+ \tanh(u)(1+\frac{1}{w^2})\alpha^{ij}u_i w_j$. Adding this to the above, the term involving $\alpha^{ij}u_i w_j$ becomes:
\begin{equation}\notag
\tanh(u) \left[ \left(1 - \frac{1}{w^2}\right) + 2 + \left(1 + \frac{1}{w^2}\right) \right] \alpha^{ij}u_i w_j = 4\tanh(u)\alpha^{ij}u_i w_j.
\end{equation}

Therefore,
\begin{align*}
\mathscr{L}w =  &\frac{u^k}{w\cosh^2(u)} (\alpha^{ij}D_k u_{ij} + \gamma u_k) - \frac{\tanh(u)}{w}(w^2-1)u_t \\
&- \alpha^{ij}\bigg(\frac{1}{w\cosh^2(u)} \left(u^\ell_j u_{\ell i} + u^\ell D_j u_{\ell i}\right)  -\frac{\text{sech}^2(u)}{w}(w^2-1)u_i u_j \\
&-\frac{\tanh(u)}{w}(w^2-1)u_{ij}  -4\tanh(u) u_i w_j \bigg)\\
&+\frac{1}{w}\alpha^{ij}w_i w_j + \frac{2\tanh^2(u)}{w}(w^2-1)\alpha^{ij}u_i u_j+ \beta^k \frac{\tanh(u)}{w}(w^2-1)u_k.
\end{align*}

We isolate the third-order covariant derivatives remaining in $\mathscr{L}w$, and define
\begin{equation}\notag
\mathcal{C} = \frac{u^k}{w\cosh^2(u)}\alpha^{ij}(D_k u_{ij} - D_j u_{ik}).
\end{equation}
We apply the Ricci identity $D_k u_{ij} - D_j u_{ik} = R_{kji}^p u_p$. For the background hyperbolic metric $(g_0)$ with constant sectional curvature $-1$, the Riemann tensor is $R_{kji}^p = -(\delta_k^p (g_0)_{ij} - \delta_j^p (g_0)_{ki})$. Contracting this with $u_p$ yields $D_k u_{ij} - D_j u_{ik} = u_j (g_0)_{ki} - u_k (g_0)_{ij}$. 
Substituting this into $\mathcal{C}$ and using the definition $\alpha^{ij} = \frac{1}{w^2}g^{ij}$, we evaluate
\begin{align*}
\mathcal{C} &= \frac{1}{w^3 \cosh^2(u)} g^{ij} u^k (u_j (g_0)_{ki} - u_k (g_0)_{ij}) \\
&= \frac{1}{w^3 \cosh^2(u)} g^{ij} (u_i u_j - |Du|^2 (g_0)_{ij}) = - \frac{w^2-1}{w^3 \cosh^2(u)}\,,
\end{align*}
where we used \eqref{eqn:guiuj}: $g^{ij} u_i u_j = \frac{w^2-1}{w^2}$, \eqref{eqn:mismatchedgs}: $g^{ij} (g_0)_{ij} = \frac{2}{\cosh^2(u)} - \frac{|Du|^2}{w^2\cosh^4(u)}$ and $|Du|^2 = \cosh^2(u)(w^2-1)$. Therefore,
\begin{align*}
\mathscr{L}w =  &\frac{\gamma u_k u^k}{w\cosh^2(u)} - \frac{\tanh(u)}{w}(w^2-1)u_t \\
&- \alpha^{ij}\bigg(\frac{1}{w\cosh^2(u)} \left(u^\ell_j u_{\ell i}\right)  -\frac{\text{sech}^2(u)}{w}(w^2-1)u_i u_j -\frac{\tanh(u)}{w}(w^2-1)u_{ij} \\
&-4\tanh(u) u_i w_j\bigg)+\frac{1}{w}\alpha^{ij}w_i w_j + \frac{2\tanh^2(u)}{w}(w^2-1)\alpha^{ij}u_i u_j \\
& - \frac{w^2-1}{w^3 \cosh^2(u)}+ \beta^k \frac{\tanh(u)}{w}(w^2-1)u_k.
\end{align*}

Now to assemble all evaluated components, using \eqref{eqn:u_t}, $u_t = \frac{1}{w^2}g^{ij}u_{ij}-\frac{\tanh(u)}{w^2}\left(3-\frac{1}{w^2}\right)+\frac{\sigma}{w}$ and substituting $\frac{1}{w^2}g^{ij} = \alpha^{ij}$ yields:
\begin{align*}
- \frac{\tanh(u)}{w}(w^2-1)u_t = &- \frac{\tanh(u)}{w}(w^2-1)\alpha^{ij}u_{ij} + \frac{\tanh^2(u)(w^2-1)}{w^3}\left(3-\frac{1}{w^2}\right)\\
&- \frac{\sigma \tanh(u)(w^2-1)}{w^2}.
\end{align*}
Crucially, the term $- \frac{\tanh(u)}{w}(w^2-1)\alpha^{ij}u_{ij}$ cancels the $+ \frac{\tanh(u)}{w}(w^2-1)\alpha^{ij}u_{ij}$ term in $\mathscr{L}w $ above. Therefore,
\begin{align}\label{eqn:lW}
\mathscr{L}w =  &\frac{\gamma u_k u^k}{w\cosh^2(u)} + \frac{\tanh^2(u)(w^2-1)}{w^3}\left(3-\frac{1}{w^2}\right) - \frac{\sigma \tanh(u)(w^2-1)}{w^2}\\
&- \alpha^{ij}\left(\frac{1}{w\cosh^2(u)} \left(u^\ell_j u_{\ell i}\right)  -\frac{\text{sech}^2(u)}{w}(w^2-1)u_i u_j -4\tanh(u) u_iw_j \right)\notag\\
&+\frac{1}{w}\alpha^{ij}w_i w_j + \frac{2\tanh^2(u)}{w}(w^2-1)\alpha^{ij}u_i u_j - \frac{w^2-1}{w^3 \cosh^2(u)}+ \beta^k \frac{\tanh(u)}{w}(w^2-1)u_k.\notag
\end{align}

Now, using \eqref{eqn:gamma} and multiplying this by $\frac{u^k u_k}{w\cosh^2(u)} = \frac{w^2-1}{w}$ we get
\begin{align*}
\frac{\gamma u_ku^k}{w\cosh^2(u)} = &\left(- \frac{2\tanh(u)}{w^4\cosh^2(u)} (g_0)^{ij} u_{ij} + \frac{4\tanh(u)}{w^6\cosh^4(u)} u^i u^j u_{ij}\right)\frac{w^2-1}{w} \\
&-\frac{\text{sech}^2(u)(w^2-1)}{w^3}\left(3-\frac{1}{w^2}\right) -\frac{2\tanh^2(u)(w^2-1)^2}{w^5}\left(3-\frac{2}{w^2}\right) \\
&+ \frac{\sigma \tanh(u)(w^2-1)^2}{w^4}.
\end{align*}

Multiplying $\beta^k$ in \eqref{eqn:beta-k} by $\frac{\tanh(u)}{w}(w^2-1)u_k$ and using $u^k u_k = \cosh^2(u)(w^2-1)$, we have
\begin{align*}
\beta^k \frac{\tanh(u)}{w}(w^2-1)u_k = &- \frac{2\tanh(u)(w^2-1)^2}{w^5\cosh^2(u)} (g_0)^{ij} u_{ij} - \frac{2\tanh(u)(w^2-1)}{w^5\cosh^4(u)} u^i u^j u_{ij} \\
&+ \frac{4\tanh(u)(w^2-1)^2}{w^7\cosh^4(u)} u^i u^j u_{ij} \\
&+ \frac{2\tanh^2(u)(w^2-1)^2}{w^5}\left(3-\frac{2}{w^2}\right) - \frac{\sigma \tanh(u)(w^2-1)^2}{w^4}.
\end{align*}
Note that the $\tanh^2(u)$ and $\sigma$ terms in $\frac{\gamma u_k u^k}{w\cosh^2(u)}$ and $\beta^k \frac{\tanh(u)}{w}(w^2-1)u_k $ cancel out. Now grouping the $(g_0)^{ij} u_{ij}$ terms yields:
\begin{equation}\notag
- \frac{2\tanh(u)(w^2-1)}{w^5\cosh^2(u)} \big[ 1 + (w^2-1) \big] (g_0)^{ij} u_{ij} = - \frac{2\tanh(u)(w^2-1)}{w^3\cosh^2(u)} (g_0)^{ij} u_{ij}.
\end{equation}
Grouping the $u^i u^j u_{ij}$ terms yields:
\begin{equation}\notag
\frac{2\tanh(u)(w^2-1)}{w^7\cosh^4(u)} \big[ 2 - w^2 + 2(w^2-1) \big] u^i u^j u_{ij} = \frac{2\tanh(u)(w^2-1)}{w^5\cosh^4(u)} u^i u^j u_{ij}.
\end{equation}
Therefore,
\begin{align}\label{eqn:gamma-beta-k}
    &\frac{\gamma u_k u^k}{w\cosh^2(u)}+ \beta^k \frac{\tanh(u)}{w}(w^2-1)u_k \\
    =&  - \frac{2\tanh(u)(w^2-1)}{w^3\cosh^2(u)} (g_0)^{ij} u_{ij} + \frac{2\tanh(u)(w^2-1)}{w^5\cosh^4(u)} u^i u^j u_{ij} -\frac{\text{sech}^2(u)(w^2-1)}{w^3}\left(3-\frac{1}{w^2}\right)\notag\\
    =& - \frac{2\tanh(u)(w^2-1)}{w}\left(\frac{(g_0)^{ij}}{w^2\cosh^2(u)} - \frac{u^i u^j}{w^4\cosh^4(u)}\right)u_{ij}-\frac{\text{sech}^2(u)(w^2-1)}{w^3}\left(3-\frac{1}{w^2}\right)\notag\\
    =& - \frac{2\tanh(u)(w^2-1)}{w} \alpha^{ij} u_{ij}-\frac{\text{sech}^2(u)(w^2-1)}{w^3}\left(3-\frac{1}{w^2}\right)\,,\notag
\end{align}
where we have used \eqref{eqn:alpha-ij} for $\alpha^{ij}$.

Finally, we substitute \eqref{eqn:gamma-beta-k} back into $\mathscr{L}w$ in \eqref{eqn:lW}. Using \eqref{eqn:alpha-ij}, we have 
\begin{equation}\label{eqn:alpha-u-i-u-j-1}
\alpha^{ij}u_i u_j =  \frac{1}{w^2 \cosh^2(u)} \left( |Du|^2 - \frac{|Du|^4}{w^2 \cosh^2(u)} \right) =\frac{w^2-1}{w^4}.
\end{equation}
Grouping all $\text{sech}^2(u)$ terms (using $\text{sech}^2(u) = \frac{1}{\cosh^2(u)}$) in \eqref{eqn:lW}:
\begin{align*}
\Sigma_{\text{sech}} &= \frac{\text{sech}^2(u)(w^2-1)}{w^5} \left[ -w^2\left(3-\frac{1}{w^2}\right) + (w^2-1) - w^2 \right] \\
& = - \frac{3(w^2-1)}{w^3\cosh^2(u)}.
\end{align*}
Grouping all $\tanh^2(u)$ terms in \eqref{eqn:lW}:
\begin{align*}
\Sigma_{\tanh} &= \frac{\tanh^2(u)(w^2-1)}{w^5} \left[ w^2\left(3-\frac{1}{w^2}\right) + 2(w^2-1) \right] \\
& = \frac{\tanh^2(u)(w^2-1)(5w^2-3)}{w^5}.
\end{align*}

Finally, we have
\begin{align} \label{eqn:L_w_exact_identity}
\mathscr{L}w = & - \frac{1}{w\cosh^2(u)} \alpha^{ij}(g_0)^{k\ell} u_{ik} u_{j\ell} - \frac{2\tanh(u)(w^2-1)}{w} \alpha^{ij} u_{ij} \nonumber \\
& - \frac{3(w^2-1)}{w^3\cosh^2(u)}  + \frac{1}{w} \alpha^{ij} w_i w_j + 4\tanh(u) \alpha^{ij} u_i w_j \notag\\
&+ \frac{\tanh^2(u)(w^2-1)(5w^2-3)}{w^5} - \frac{\sigma \tanh(u)(w^2-1)}{w^2}.
\end{align}

We bound the right-hand side of \eqref{eqn:L_w_exact_identity} term by term except the terms involving $w_j$. First, we consider the sum of the following three terms:
\begin{equation}\notag
P(w) = - \frac{3(w^2-1)}{w^3\cosh^2(u)} + \frac{\tanh^2(u)(w^2-1)(5w^2-3)}{w^5} - \frac{\sigma \tanh(u)(w^2-1)}{w^2}.
\end{equation}
Since $w \ge 1$ and the height function $u$ is uniformly bounded ($\|u\|_{C^0} \le C_{b}$) by Lemma \ref{lem:height_bound}, the first two terms of $P(w)$ decay as $\mathcal{O}(w^{-1})$ as $w \to \infty$, and the third term converges to a constant bounded by $|\sigma| \tanh(C_b)$ as $w \to \infty$. Therefore, there exists a uniform constant $C_1 > 0$ such that $P(w) \le C_1 \le C_1 w$.

Now we define the primary positive semi-definite quadratic Hessian term in \eqref{eqn:L_w_exact_identity} as 
\begin{equation}\label{eqn:mathcal-Q}
\mathcal{Q} = \frac{1}{w\cosh^2(u)} \alpha^{ij} (g_0)^{k\ell} u_{ik} u_{j\ell}.
\end{equation}
To see $\mathcal{Q}$ is positive semi-definite, we first note that using \eqref{eqn:alpha-ij}: $\alpha^{ij} = \frac{(g_0)^{ij}}{w^2\cosh^2(u)} - \frac{u^i u^j}{w^4\cosh^4(u)}$ and substituting $|Du|_{g_0}^2 = \cosh^2(u)(w^2-1)$, we have
\begin{align}\label{eqn:alpha-u-i}\notag
\alpha^{ij}u_i &= \frac{u^j}{w^2\cosh^2(u)} - \frac{|Du|_{g_0}^2 u^j}{w^4\cosh^4(u)} \nonumber \\
&= \frac{u^j}{w^2\cosh^2(u)}\left(1 - \frac{w^2-1}{w^2}\right) = \frac{u^j}{w^4\cosh^2(u)}.
\end{align}
Thus, the gradient vector $Du$ is an eigenvector of $\alpha^{ij}$ with respect to $g_0$ with eigenvalue:
\begin{equation}\label{eqn:Du-as-eigenvector}
\lambda_{\parallel} = \frac{1}{w^4\cosh^2(u)}.
\end{equation}
Now let $V$ be a tangent vector such that $\langle V, Du \rangle_{g_0} = u_i V^i = 0$. Contracting $\alpha^{ij}$ with $V_j$ yields:
\begin{align*}
\alpha^{ij} V_j &= \frac{1}{w^2\cosh^2(u)} (g_0)^{ij} V_j - \frac{1}{w^4\cosh^4(u)} u^i (u^j V_j) \\
&= \frac{1}{w^2\cosh^2(u)} V^i.
\end{align*}
Thus, any vector orthogonal to $Du$ is an eigenvector of $\alpha^{ij}$ with eigenvalue:
\begin{equation}\label{eqn:max-eigenvalue}
\lambda_{\perp} = \frac{1}{w^2\cosh^2(u)}.
\end{equation}
Therefore, the minimum and maximum eigenvalues of $\alpha^{ij}$ are
\begin{equation}
\lambda_{\min} = \lambda_{\parallel} = \frac{1}{w^4\cosh^2(u)} \quad \text{and} \quad \lambda_{\max} = \lambda_{\perp} = \frac{1}{w^2\cosh^2(u)}\,.
\end{equation}
By definition, $w\cosh^2(u)\,\mathcal{Q}$ is the trace of a positive semi-definite 2-tensor $M = M_{ij} = (g_0)^{k\ell} u_{ik} u_{j\ell}$ against $\alpha^{ij}$. To see $M$ is positive semi-definite, we contract it with an arbitrary tangent vector $V$:
\begin{equation}
M_{ij} V^i V^j = (g_0)^{k\ell} (u_{ik} V^i) (u_{j\ell} V^j) = (g_0)^{k\ell} W_k W_\ell = |W|_{g_0}^2 \ge 0\,,
\end{equation}
where $W_k = u_{ik} V^i$.
Therefore, because $M_{ij}$ is positive semi-definite, its trace against the positive semi-definite metric $\alpha^{ij}$ is bounded below by the product of the minimum eigenvalue of $\alpha$ and the $g_0$-trace of $M$:
\begin{align} \label{eqn:mathcal-Q}
\mathcal{Q} =& \frac{1}{w\cosh^2(u)}\alpha^{ij} M_{ij} \ge \frac{1}{w\cosh^2(u)} \lambda_{\min}(\alpha) \text{tr}_{g_0}(M) \\
=& \frac{1}{w\cosh^2(u)} \lambda_{\min}(\alpha)  (g_0)^{ij} M_{ij} = \frac{1}{w\cosh^2(u)} \lambda_{\min}(\alpha)  (g_0)^{ij} (g_0)^{k\ell} u_{ik} u_{j\ell} \notag\\
=& \frac{1}{w^5\cosh^4(u)}  |D^2u|_{g_0}^2 \geq 0.\notag
\end{align}

Now we evaluate the remaining term $\mathcal{L}_{trace} = - \frac{2\tanh(u)(w^2-1)}{w} \alpha^{ij} u_{ij}$ in \eqref{eqn:L_w_exact_identity}. Because $\alpha^{ij}$ is positive-definite and symmetric, let $S^{ij}$ be the unique positive-definite symmetric square root of $\alpha^{ij}$ such that $\alpha^{ij} = \sum_{k=1}^2 S^{ik}S^{kj}$. We define the $2\times 2$ matrix $M = M^{k \ell} = S^{ik} u_{ij} S^{j\ell}$ and rewrite 
\begin{equation}
\alpha^{ij} u_{ij} =  \sum_{k=1}^2 S^{ik} S^{kj} u_{ij} =  \sum_{k=1}^2 S^{ik} u_{ij} S^{j\ell} \delta^k_{\ell} = \sum_{k=1}^2 M^{k \ell} \delta^k_{\ell} = \text{tr}(M).
\end{equation}
We apply the trace inequality
\begin{equation}
(\text{tr}(M))^2 \leq 2 \text{tr}(M^2),
\end{equation}
which yields
\begin{align}\label{eqn:alpha-ij-to-M}
(\alpha^{ij}u_{ij})^2 \leq& 2\text{tr}(M^2) = 2\sum_{\ell=1}^2 \sum_{k=1}^2 (S^{ik} u_{ij} S^{j\ell}) (S^{p\ell} u_{pq} S^{qk}) \notag\\
=& 2\alpha^{qi} u_{ij} \alpha^{jp} u_{pq} = 2\alpha^{ik} \alpha^{j\ell} u_{ij} u_{k\ell},
\end{align}
where we used $S^{j\ell} = S^{\ell j}$, $\sum_{\ell=1}^2 S^{j\ell} S^{p\ell} = \alpha^{jp}$ and $\sum_{k=1}^2 S^{qk} S^{ik} = \alpha^{qi}$. Because the max eigenvalue of $\alpha^{ij}$ is $\frac{1}{w^2\cosh^2(u)}$ (see \eqref{eqn:max-eigenvalue}), by \eqref{eqn:alpha-ij-to-M} and the definition of $\mathcal{Q}$ we have 
\begin{equation}\label{eqn:alpha-u-ij-sq}
(\alpha^{ij}u_{ij})^2 \le \frac{2}{w^2\cosh^2(u)} \alpha^{ik}(g_0)^{j\ell} u_{ij}u_{k\ell} = \frac{2}{w}\mathcal{Q}.
\end{equation}
Therefore,
\begin{equation}
|\mathcal{L}_{trace}| \le \frac{2(w^2-1)}{w} |\alpha^{ij}u_{ij}| \le 2w \sqrt{\frac{2}{w}\mathcal{Q}} = 2\sqrt{2w}\sqrt{\mathcal{Q}} \le  \frac{\mathcal{Q}}{2} + 4 w\,,
\end{equation}
where we applied Young's inequality in the last step. Combining these estimates we obtain
\begin{equation}\label{eqn:w_bound_linear_global}
\mathscr{L} w \le C_0 w - \frac{\mathcal{Q}}{2}  + \frac{1}{w} \alpha^{ij} w_i w_j + 4\tanh(u) \alpha^{ij} u_i w_j.
\end{equation}
Discarding the negative-definite term $- \frac{\mathcal{Q}}{2}$ yields $\mathscr{L}w \le C_0 w + \frac{1}{w} \alpha^{ij} w_i w_j + 4\tanh(u) \alpha^{ij} u_i w_j$.
\end{proof}

\begin{corollary}[Time-Dependent Exponential Gradient Bound]\label{cor:exponential_bound}
Let $u \in C^{2,1}(\Sigma \times [0,T))$ be a smooth solution to the modified mean curvature flow \eqref{eqn:u_t}. The gradient function $w$ satisfies the time-dependent exponential bound:
\begin{equation}\label{eqn:exp-grad-bound}\notag
w(x,t) \le \left( \sup_{x \in \Sigma} w(x,0) \right) e^{C_0 t} \qquad \forall (x,t) \in \Sigma \times [0,T),
\end{equation}
where the constant $C_0 > 0$ is given in Lemma \ref{lem:w_evolution}. Consequently, the flow cannot develop finite-time gradient singularities, ensuring that the smooth solution can be extended for all time $t \in [0,\infty)$. Moreover, the flow converges exponentially in $C^0$ to the equidistant surface $\Sigma(r_\sigma)$ where $r_\sigma = \tanh^{-1}\left(\frac{\sigma}{2}\right)$ is from Lemma \ref{lem:height_bound}.
\end{corollary}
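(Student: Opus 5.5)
The plan is to apply the parabolic maximum principle to $\tilde w := e^{-C_0 t} w$, using inequality \eqref{eqn:w_bound_linear} from Lemma \ref{lem:w_evolution}. The key point is that the right-hand side of \eqref{eqn:w_bound_linear}, apart from $C_0 w$, consists only of terms carrying a factor of $Dw$: namely $\frac{1}{w}\alpha^{ij}w_iw_j + 4\tanh(u)\alpha^{ij}u_iw_j$. These are linear in $w_j$ once we regard $b^j := \frac{1}{w}\alpha^{ij}w_i + 4\tanh(u)\alpha^{ij}u_i$ as a (solution-dependent but smooth) drift coefficient.

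\textbf{The bound on $w$.} With this drift, \eqref{eqn:w_bound_linear} reads
\[
\frac{\partial w}{\partial t} - \alpha^{ij}w_{ij} - (\beta^j + b^j) w_j \le C_0 w .
\]
Since $\tilde w = e^{-C_0 t} w$, this becomes
\[
\frac{\partial \tilde w}{\partial t} - \alpha^{ij}\tilde w_{ij} - (\beta^j + b^j)\tilde w_j \le 0 .
\]
Fix $T' < T$. On $\Sigma\times[0,T']$ all coefficients are smooth and bounded, and $\alpha^{ij}$ is positive definite, so the weak maximum principle on the closed surface $\Sigma$ applies. Concretely, at an interior spatial maximum of $\tilde w(\cdot,t)$ we have $D\tilde w = 0$ and $\alpha^{ij}\tilde w_{ij}\le 0$. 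To make this rigorous, one either considers $\tilde w - \varepsilon t$ and argues by contradiction at a first time where it exceeds $\sup \tilde w(\cdot,0)$, or invokes Hamilton's maximum principle for $\max_\Sigma \tilde w(\cdot,t)$. This gives $\tilde w \le \sup_\Sigma w(\cdot,0)$, which is the claimed bound. The constant $C_0$ is uniform because it depends only on $g_0$ and the $C^0$ bound from Lemma \ref{lem:height_bound}, which holds for as long as the flow exists.

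\textbf{Long-time existence.} Suppose the maximal existence time $T$ is finite. Then on $[0,T)$ we have $w \le (\sup w_0)e^{C_0T}$, and $|u|$ is bounded by Lemma \ref{lem:height_bound}. So \eqref{eqn:u_t} is uniformly parabolic with eigenvalues of $\alpha^{ij}$ between $\frac{1}{w^4\cosh^2 u}$ and $\frac{1}{w^2\cosh^2u}$, and it has bounded coefficients. Since the equation is quasilinear with smooth structure, Krylov--Safonov/Hölder gradient estimates (in the spirit of Ladyzhenskaya--Ural'tseva) give a $C^{1,\alpha}$ bound on $u$. Schauder theory and bootstrapping then give uniform $C^k$ bounds up to $T$. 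Hence $u(\cdot,t)$ converges smoothly as $t\to T$, and short-time existence restarted at $T$ contradicts maximality. Thus $T=\infty$, and $\Theta = 1/w>0$ throughout, so the surface stays a graph. This regularity step is where I expect the main technical care to be needed: one has to check the structure conditions of the quasilinear theory on a compact manifold. Because all bounds depend on $T$ only through $e^{C_0T}$, however, the argument is standard; this is the content of Remark \ref{rmk:uniform-higher-order-bound}.

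\textbf{$C^0$ convergence.} The exponential $C^0$ convergence is immediate from Lemma \ref{lem:height_bound}, now valid for all $t\ge0$:
\[
|u(x,t)-r_\sigma| \le \max_\pm |r_\pm(t)-r_\sigma| \le M e^{-\frac{4-\sigma^2}{2}t}.
\]
So $S(t)$ lies between $\Sigma(r_\sigma \pm Me^{-\frac{4-\sigma^2}{2}t})$ and converges exponentially in Hausdorff/$C^0$ sense to $\Sigma(r_\sigma)$.
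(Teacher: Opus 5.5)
Your proposal is correct and matches the paper's argument. The paper likewise absorbs $\frac{1}{w}\alpha^{ij}w_iw_j + 4\tanh(u)\alpha^{ij}u_iw_j$ into a modified drift $\tilde\beta^k$, checks that $\tilde{\mathscr L}(e^{-C_0t}w) = e^{-C_0t}(\tilde{\mathscr L}w - C_0 w)\le 0$, and applies the parabolic maximum principle; your long-time existence step (finite-interval gradient bound plus standard quasilinear/Schauder regularity, or equivalently the curvature estimates of Remark~\ref{rmk:uniform-higher-order-bound}) and your $C^0$ convergence via Lemma~\ref{lem:height_bound} fill in steps the paper only asserts.
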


\begin{proof}
We define the vector field $\tilde{\beta}^k$:
\begin{equation}\notag
\tilde{\beta}^k = \beta^k + \frac{1}{w}\alpha^{ik}w_i + 4\tanh(u)\alpha^{ik}u_i
\end{equation}
and the corresponding linear parabolic operator 
\begin{equation}\label{eqn:tilde-L}\notag
\tilde{\mathscr{L}} = \frac{\partial}{\partial t} - \alpha^{ij} D_i D_j - \tilde{\beta}^k D_k.
\end{equation}
The inequality \eqref{eqn:w_bound_linear} can be rewritten as:
\begin{equation}\notag
\tilde{\mathscr{L}} w \le C_0 w.
\end{equation}

Now, let $\tilde{w}(x,t) = e^{-C_0 t} w(x,t)$. We have
\begin{align*}
\tilde{\mathscr{L}} \tilde{w} &= \frac{\partial}{\partial t}(e^{-C_0 t} w) - \alpha^{ij} D_i D_j (e^{-C_0 t} w) - \tilde{\beta}^k D_k (e^{-C_0 t} w) \\
&= -C_0 e^{-C_0 t} w + e^{-C_0 t} \frac{\partial w}{\partial t} - e^{-C_0 t} \alpha^{ij} w_{ij} - e^{-C_0 t} \tilde{\beta}^k w_k \\
&= e^{-C_0 t} (\tilde{\mathscr{L}} w - C_0 w) \le 0. 
\end{align*} 
Because $\tilde{\mathscr{L}}$ is a uniformly parabolic operator with continuous coefficients, the standard parabolic maximum principle guarantees that the spatial maximum of the sub-solution $\tilde{w}$ is non-increasing in time. Therefore, for all $t \in [0,T)$, we have:
\begin{equation}\notag
e^{-C_0 t} w(x,t) = \tilde{w}(x,t) \le \sup_{x \in \Sigma} \tilde{w}(x,0) = \sup_{x \in \Sigma} w(x,0)\,,
\end{equation}
which yields \eqref{eqn:exp-grad-bound} by multiplying both sides by $e^{C_0 t}$.
\end{proof}

\section{Uniform Gradient Bound and Convergence}\label{sec:unif-gra-boud-conv}
\subsection{The case $\sigma=0$}
We now use the test function 
$$\Phi(x,t) = \cosh(u(x,t)) w(x,t)$$ 
to derive an improved linear bound on the gradient function $w$, and in particular a uniform gradient bound in the case of mean curvature flow ($\sigma=0$).

\begin{theorem}\label{thm:gradient_bound-sigma-0}
Let $u \in C^{2,1}(\Sigma \times [0,T))$ be a smooth solution to the modified mean curvature flow \eqref{eqn:u_t} with $\sigma\in (-2,2)$. There exist constants $C_1, C_2 > 0$, depending only on the background metric $g_0$ on $\Sigma$, the $C^0$ bound of $u$ in Lemma \ref{lem:height_bound}, the constant $\sigma$, and the initial gradient bound $\|w(\cdot, 0)\|_\infty$, such that:
\begin{equation}
w(x,t) \le C_1 (1+t) \qquad \forall (x,t) \in \Sigma \times [0,T).
\end{equation}
Moreover, when $\sigma=0$ we have
\begin{equation}
w(x,t) \le C_2 \qquad \forall (x,t) \in \Sigma \times [0,T).
\end{equation}
\end{theorem}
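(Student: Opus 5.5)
The plan is to run the maximum principle on $\Phi=\cosh(u)\,w=\sqrt{\cosh^2(u)+|Du|^2}=\sqrt{\rho}$ instead of on $w$. The point is that multiplying by $\cosh(u)$ should absorb the dangerous zeroth order term $C_0w$ of Lemma \ref{lem:w_evolution} (which came from $\mathcal{L}_{trace}$ and from $P(w)$), leaving only a bounded reaction term.

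\textbf{Step 1: evolution of $\Phi$.} Start from the exact identity \eqref{eqn:L_w_exact_identity} for $\mathscr{L}w$ and use the product rule
\[
\mathscr{L}\Phi=\cosh(u)\,\mathscr{L}w+w\,\mathscr{L}\cosh(u)-2\alpha^{ij}D_i\cosh(u)\,D_jw .
\]
To compute $\mathscr{L}\cosh(u)=\sinh(u)\big(u_t-\alpha^{ij}u_{ij}-\beta^ku_k\big)-\cosh(u)\alpha^{ij}u_iu_j$, substitute $u_t$ from \eqref{eqn:u_t}, $\beta^k$ from \eqref{eqn:beta-k}, and $\alpha^{ij}u_iu_j=(w^2-1)/w^4$ from \eqref{eqn:alpha-u-i-u-j-1}.

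The key expected cancellation concerns the Hessian trace terms $\alpha^{ij}u_{ij}$ and $u^iu^ju_{ij}$. Those generated by $w\sinh(u)(u_t-\beta^ku_k)$ should cancel the term $-2\tanh(u)(w^2-1)w^{-1}\cosh(u)\,\alpha^{ij}u_{ij}$ in $\cosh(u)\mathscr{L}w$, up to pieces that can be rewritten via \eqref{eqn:w_k} as $\alpha^{ij}u_iw_j$ or $\alpha^{ij}w_iw_j$. I would then rewrite every first order term as $D\Phi$ by means of
\[
w_j=\frac{\Phi_j}{\cosh(u)}-\tanh(u)\,w\,u_j .
\]
This should give an inequality of the form
\[
\mathscr{L}\Phi\le b^k\Phi_k-\tfrac12\cosh(u)\,\mathcal{Q}+R(u,w),
\]
with a vector field $b^k$ and a zeroth order remainder $R$.

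\textbf{Step 2: bounding $R$.} Here I would use $w\ge1$ and the $C^0$ bound of Lemma \ref{lem:height_bound}, together with the Cauchy--Schwarz/Young argument used for \eqref{eqn:alpha-u-ij-sq}, to absorb any leftover Hessian terms into $\mathcal{Q}$. The aim is to show that every term of $R$ either carries a nonpositive sign (for example the $-3(w^2-1)/(w^3\cosh^2 u)$ type terms) or is bounded by
\[
C\,\big(|\tanh(u)|+|\sigma|\big),
\]
with $C$ independent of $w$. In particular the $\sigma$-term should reduce to something like $-\sigma\sinh(u)(w^2-1)/w$, with the other $\sigma$ contributions cancelling, and after the $C^0$ bound this is at most $C|\sigma|$.

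\textbf{Step 3: linear bound.} At a spatial maximum of $\Phi$ the gradient terms vanish and $\mathcal{Q}\ge0$. The maximum principle, applied as in Corollary \ref{cor:exponential_bound}, then gives $\frac{d}{dt}\max\Phi\le C$. Hence $\max\Phi(t)\le\max\Phi(0)+Ct$, and since $\cosh(u)\ge1$ we get $w\le C_1(1+t)$.

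\textbf{Step 4: the case $\sigma=0$.} Now $r_\sigma=0$, and Lemma \ref{lem:height_bound} gives $|u|\le Me^{-2t}$. So the surviving positive part of $R$, which is proportional to $|\tanh(u)|$ or $\tanh^2(u)$ (times bounded functions of $w$ in the worst case), is bounded by $Ce^{-2t}$. Integrating,
\[
\max\Phi(t)\le\max\Phi(0)+C\int_0^\infty e^{-2s}\,ds\le C_2 .
\]

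\textbf{Main obstacle.} The hard part is Step 1 and the claim in Step 2 that $R$ has no term growing like $w$ unless it comes with a factor of $\tanh(u)$ or $\sigma$. If a term like $\tanh^2(u)\,w$ survives, then for $\sigma=0$ it still decays, and I would handle it as the linear ODE $\Phi'\le Ce^{-2t}\Phi$, which also gives a uniform bound. For $\sigma\ne0$, however, a surviving term of that form would only yield exponential growth, so I would first try to show that the $\sigma\tanh(u)(w^2-1)/w^2$ term of \eqref{eqn:L_w_exact_identity} combines with the $\sigma\sinh(u)$ contributions from $\mathscr{L}\cosh(u)$ into something bounded.
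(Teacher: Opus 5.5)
Your proposal is correct, and through Step 3 it is the paper's proof. It uses the same test function $\Phi=\cosh(u)w$, the same splitting $\mathscr{L}\Phi=\cosh(u)\mathscr{L}w+w\sinh(u)\mathscr{L}u-w\cosh(u)\alpha^{ij}u_iu_j-2\sinh(u)\alpha^{ij}u_iw_j$, and the same exact cancellation of the two $\alpha^{ij}u_{ij}$ terms. It also uses the same critical-point relation $w_k=-\tanh(u)\,w\,u_k$ (equivalently $u^\ell u_{\ell k}=-\tanh(u)\cosh^2(u)u_k$) to eliminate $u^iu^ju_{ij}$ and the gradient terms.

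One prediction in Step 2 is wrong, though it does no damage. A term $-\sigma\sinh(u)(w^2-1)/w$ grows like $w$, so it could not be bounded by $C|\sigma|$. What actually happens:
\begin{itemize}
\item $\cosh(u)\mathscr{L}w$ contributes $-\sigma\sinh(u)(w^2-1)/w^2$;
\item $w\sinh(u)\mathscr{L}u$ contributes $+\sigma\sinh(u)(2w^2-1)/w^2$, coming from the $\sigma/w$ in \eqref{eqn:u_t} and the $-\sigma u^k/(w^3\cosh^2 u)$ part of \eqref{eqn:beta-k}.
\end{itemize}
These sum to exactly $\sigma\sinh(u)$. So the obstacle you flagged resolves in your favor, and the linear bound follows exactly as in the paper.

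The genuine difference is how you close the case $\sigma=0$. Carrying \eqref{eqn:L_w_exact_identity} through, at a spatial maximum one gets $\mathscr{L}\Phi=-\cosh(u)\mathcal{Q}+\sigma\sinh(u)+N/(w^5\cosh u)$, where $N=-(4+6\sinh^2 u)w^4+(4+3\sinh^2 u)w^2+\sinh^2 u$.
\begin{itemize}
\item The paper uses only the $C^0$ bound and the sign of the leading part $-(6\sinh^2 u+4)/(w\cosh u)$ of this remainder. Hence $\Phi_{\max}$ decreases whenever it exceeds a threshold $\bar C$.
\item You instead use $r_0=0$ and $|u|\le Me^{-2t}$ from Lemma \ref{lem:height_bound}. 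Every positive piece of $N$ carries a factor $\sinh^2 u$, so it is $O(e^{-4t})$ and integrable in time.
\end{itemize}

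Your closing is less sensitive to the exact coefficients; as you note, it would even survive a $\tanh^2(u)\,w$ term via Gr\"onwall. The price is that it uses the time decay of $u$, which is special to $r_\sigma=0$. The paper's closing needs the precise sign but no decay. In fact $N\le-2\sinh^2 u\le0$ for all $w\ge1$, so for $\sigma=0$ the maximum of $\Phi$ is simply nonincreasing.

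Neither argument gives a uniform bound for $\sigma\neq0$. In that case $\sigma\sinh(u)\to\sigma\sinh(r_\sigma)>0$, which dominates the negative terms of order $1/w$.
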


\begin{proof}
We compute the linear parabolic operator $\mathscr{L} = \partial_t - \alpha^{ij} D_i D_j - \beta^k D_k$ from \eqref{eqn:operator-scr-L} acting on the test function $\Phi = \cosh(u)w$. To do so, we first compute:
\begin{align*}
\Phi_k &= \sinh(u) u_k w + \cosh(u) w_k, \\
\Phi_t &= \sinh(u) u_t w + \cosh(u) w_t, \\
\Phi_{ij} &= \cosh(u) u_i u_j w + \sinh(u) u_{ij} w + \sinh(u) (u_i w_j + u_j w_i) + \cosh(u) w_{ij}.
\end{align*}
Substituting these into $\mathscr{L}$ yields
\begin{equation}\label{eqn:L_Phi_expansion_cosh}
\mathscr{L}\Phi = \cosh(u) \mathscr{L}w + w\sinh(u) \mathscr{L}u - w\cosh(u) \alpha^{ij} u_i u_j - 2\sinh(u) \alpha^{ij} u_i w_j.
\end{equation}

Define the spatial maximum function $\Phi_{\max}(t) = \sup_{x \in \Sigma} \Phi(x,t)$. By the compactness of the closed 2-surface $\Sigma$, for each $t \in [0,T)$, there exists a point $x_t \in \Sigma$ such that $\Phi(x_t, t) = \Phi_{\max}(t)$. At this spatial maximum point $(x_t,t)$, the gradient vanishes ($\Phi_k(x_t, t) = 0$), which implies that at $(x_t,t)$ we have
\begin{equation}\label{eqn:w_i_max_relation_cosh}
w_k= -\tanh(u) w u_k.
\end{equation}
Furthermore, the Hessian matrix of $\Phi$ is negative semi-definite ($\Phi_{ij} (x_t, t)\le 0$). Because $\alpha^{ij} = \frac{1}{w^2}g^{ij}$ is strictly positive-definite, we have $-\alpha^{ij} \Phi_{ij} (x_t, t)\ge 0$. Thus, using Hamilton's trick we have
\begin{equation}\label{eqn:L_Phi_max_cosh}\notag
\frac{d}{dt} \Phi_{\max}(t) \le \mathscr{L}\Phi(x_t, t).
\end{equation}

Substituting \eqref{eqn:w_i_max_relation_cosh} into the cross term in \eqref{eqn:L_Phi_expansion_cosh}, we have  
\[- 2\sinh(u) \alpha^{ij} u_i w_j = -2\sinh(u)\alpha^{ij}u_i(-\tanh(u)w u_j) = 2w \frac{\sinh^2(u)}{\cosh(u)} \alpha^{ij}u_i u_j\,,\] 
Combining this with the $u_i u_j$ term in \eqref{eqn:L_Phi_expansion_cosh}, we obtain:
\begin{equation}\label{eqn:LPhi}
\mathscr{L}\Phi = \cosh(u) \mathscr{L}w + w\sinh(u) \mathscr{L}u + w \left( \frac{2\sinh^2(u) - \cosh^2(u)}{\cosh(u)} \right) \alpha^{ij} u_i u_j.
\end{equation}

Next, we evaluate $\mathscr{L}u = u_t - \alpha^{ij}u_{ij} - \beta^k u_k$. Using the flow equation \eqref{eqn:u_t}, we have 
\begin{equation}\label{eqn:flow-eqn-original}
u_t - \alpha^{ij}u_{ij} = - \frac{\tanh(u)}{w^2}\left(3-\frac{1}{w^2}\right) + \frac{\sigma}{w}.
\end{equation}
To compute $\beta^k u_k$, we contract $\beta^k$ with $u_k$ and use $|Du|^2 = u^k u_k = \cosh^2(u)(w^2-1)$ to get:
\begin{align}\label{eqn:beta-k-u-k-original}
\beta^k u_k = & \bigg(- \frac{2u^k}{w^4\cosh^4(u)} (g_0)^{ij} u_{ij} - \frac{2(g_0)^{ik} u^j u_{ij}}{w^4\cosh^4(u)} + \frac{4u^k u^i u^j u_{ij}}{w^6\cosh^6(u)} \notag\\
& + \frac{2\tanh(u)u^k}{w^4\cosh^2(u)}\left(3 - \frac{2}{w^2}\right) - \frac{\sigma u^k}{w^3\cosh^2(u)}\bigg) u_k \notag\\
=&  - \frac{2(w^2-1)}{w^4\cosh^2(u)} (g_0)^{ij} u_{ij} - \frac{2 u^i u^j u_{ij}}{w^4\cosh^4(u)} + \frac{4(w^2-1) u^i u^j u_{ij}}{w^6\cosh^4(u)} + P_\beta \notag\\
= &- \frac{2(w^2-1)}{w^2} \left[ \frac{(g_0)^{ij}}{w^2\cosh^2(u)} - \frac{u^i u^j}{w^4\cosh^4(u)}  \right]u_{ij} - \frac{2- 2(w^2-1)/w^2}{w^4\cosh^4(u)} u^i u^j u_{ij} + P_\beta \notag\\
= & - \frac{2(w^2-1)}{w^2} \alpha^{ij}u_{ij} - \frac{2}{w^6\cosh^4(u)} u^i u^j u_{ij} + P_\beta,
\end{align}
where we used \eqref{eqn:alpha-ij} to get $\alpha^{ij}u_{ij} = \frac{(g_0)^{ij}}{w^2\cosh^2(u)}u_{ij} - \frac{u^i u^j}{w^4\cosh^4(u)}u_{ij}$ and
\begin{equation}
P_\beta = \frac{2\tanh(u)(w^2-1)}{w^4}\left(3-\frac{2}{w^2}\right) - \frac{\sigma(w^2-1)}{w^3} = \frac{6\tanh(u)}{w^2} - \frac{\sigma}{w} + \mathcal{O}(w^{-3}),
\end{equation}
as $w \to \infty$.

Subtracting \eqref{eqn:beta-k-u-k-original} from \eqref{eqn:flow-eqn-original} yields:
\begin{equation}\label{eqn:L_u_evaluated}
\mathscr{L}u = \frac{2(w^2-1)}{w^2} \alpha^{ij}u_{ij} + \frac{2}{w^6\cosh^4(u)} u^i u^j u_{ij} + P_u,
\end{equation}
where 
\[ P_u = - P_\beta - \frac{\tanh(u)}{w^2}\left(3-\frac{1}{w^2}\right) + \frac{\sigma}{w} = - \frac{9\tanh(u)}{w^2} + \frac{2\sigma}{w} + \mathcal{O}(w^{-3}) \quad \text{as } w\to \infty. \] 

Next, substituting \eqref{eqn:w_i_max_relation_cosh} into the formula for $w_k$ from \eqref{eqn:w_k}, we have
\begin{equation*}
w_k =\frac{u^\ell u_{\ell k}}{w\cosh^2(u)} - \frac{\tanh(u)}{w}(w^2-1)u_k = -\tanh(u)w u_k \implies u^\ell u_{\ell k} = -\tanh(u)\cosh^2(u) u_k.
\end{equation*}
Contracting this with $u^k$ yields that at the maximum point $(x_t, t)$, we have
\begin{equation*}
u^i u^j u_{ij} = u^k(u^\ell u_{\ell k}) = -\tanh(u)\cosh^2(u)|Du|^2 = -\tanh(u)\cosh^4(u)(w^2-1).
\end{equation*}
Substituting this into $w\sinh(u) \mathscr{L}u$ using \eqref{eqn:L_u_evaluated}, we get
\begin{equation}\label{eqn:cross-term-in-Lu}
w\sinh(u) \left[ \frac{2}{w^6\cosh^4(u)} u^i u^j u_{ij} \right] = - \frac{2\sinh^2(u)(w^2-1)}{w^5\cosh(u)} = \mathcal{O}(w^{-3}) \quad{\text{as }} w\to \infty.
\end{equation}

Recall that we have the identity \eqref{eqn:L_w_exact_identity} for $\mathscr{L}w$:
\begin{align}\label{eqn:L_w_exact_identity-new}
\mathscr{L}w = & - \frac{1}{w\cosh^2(u)} \alpha^{ij}(g_0)^{k\ell} u_{ik} u_{j\ell} - \frac{2\tanh(u)(w^2-1)}{w} \alpha^{ij} u_{ij} \nonumber \\
& - \frac{3(w^2-1)}{w^3\cosh^2(u)} + \frac{1}{w} \alpha^{ij} w_i w_j + 4\tanh(u) \alpha^{ij} u_i w_j \notag\\
& + \frac{\tanh^2(u)(w^2-1)(5w^2-3)}{w^5} - \frac{\sigma \tanh(u)(w^2-1)}{w^2}.
\end{align}
At the spatial maximum point $(x_t, t)$, we substitute \eqref{eqn:w_i_max_relation_cosh} into the gradient cross-terms of \eqref{eqn:L_w_exact_identity-new}:
\begin{align*}
\frac{1}{w} \alpha^{ij} w_i w_j &= \frac{1}{w} \alpha^{ij} \big( -\tanh(u) w u_i \big) \big( -\tanh(u) w u_j \big) = w\tanh^2(u) \alpha^{ij}u_i u_j, \\
4\tanh(u) \alpha^{ij} u_i w_j &= 4\tanh(u) \alpha^{ij} u_i \big( -\tanh(u) w u_j \big) = -4w\tanh^2(u) \alpha^{ij}u_i u_j.
\end{align*}
Summing these evaluates to $-3w\tanh^2(u) \alpha^{ij}u_i u_j$. Applying \eqref{eqn:alpha-u-i-u-j-1}: $\alpha^{ij}u_i u_j = \frac{w^2-1}{w^4}$, this reduces to:
\begin{equation*}
-3w\tanh^2(u) \left(\frac{w^2-1}{w^4}\right) = - \frac{3\tanh^2(u)(w^2-1)}{w^3}.
\end{equation*}
Combining this with the $\frac{\tanh^2(u)(w^2-1)(5w^2-3)}{w^5}$ term from \eqref{eqn:L_w_exact_identity-new} yields:
\begin{equation*}
- \frac{3\tanh^2(u)(w^2-1)}{w^3} + \frac{\tanh^2(u)(w^2-1)(5w^2-3)}{w^5} = \frac{\tanh^2(u)(w^2-1)(2w^2-3)}{w^5}.
\end{equation*}
Therefore, evaluated at the maximum point $(x_t, t)$, $\mathscr{L}w$ simplifies to:
\begin{align}\label{eqn:L_w_simplified}
\mathscr{L}w = & - \frac{1}{w\cosh^2(u)} \alpha^{ij}(g_0)^{k\ell} u_{ik} u_{j\ell} - \frac{2\tanh(u)(w^2-1)}{w} \alpha^{ij}u_{ij}  \notag\\
& - \frac{3(w^2-1)}{w^3\cosh^2(u)} + \frac{\tanh^2(u)(w^2-1)(2w^2-3)}{w^5} - \frac{\sigma \tanh(u)(w^2-1)}{w^2}.
\end{align}

Now we substitute \eqref{eqn:L_u_evaluated} and \eqref{eqn:L_w_simplified} into \eqref{eqn:LPhi}. First, we consider the two Hessian trace terms involving $\alpha^{ij}u_{ij}$:
\begin{align*}
\text{Trace}_{\mathscr{L}w} &:= \cosh(u) \left[ - \frac{2\tanh(u)(w^2-1)}{w} \alpha^{ij}u_{ij} \right] = - \frac{2\sinh(u)(w^2-1)}{w} \alpha^{ij}u_{ij}, \\
\text{Trace}_{\mathscr{L}u} &:= w\sinh(u) \left[ \frac{2(w^2-1)}{w^2} \alpha^{ij}u_{ij} \right] =  \frac{2\sinh(u)(w^2-1)}{w} \alpha^{ij}u_{ij}.
\end{align*}
Therefore, these two trace terms sum to zero at the maximum point $(x_t,t)$ in \eqref{eqn:LPhi}.

Combining these using \eqref{eqn:L_u_evaluated}, \eqref{eqn:cross-term-in-Lu}, \eqref{eqn:L_w_simplified}, and \eqref{eqn:alpha-u-i-u-j-1}, we obtain:
\begin{align}\label{eqn:LPhi_updated}
\mathscr{L}\Phi = &\cosh(u) \mathscr{L}w + w\sinh(u) \mathscr{L}u + w \left( \frac{2\sinh^2(u) - \cosh^2(u)}{\cosh(u)} \right) \alpha^{ij} u_i u_j \notag\\
=&\cosh(u) \Big( - \frac{1}{w\cosh^2(u)} \alpha^{ij}(g_0)^{k\ell} u_{ik} u_{j\ell} - \frac{3(w^2-1)}{w^3\cosh^2(u)} \notag\\
& + \frac{\tanh^2(u)(w^2-1)(2w^2-3)}{w^5} - \frac{\sigma \tanh(u)(w^2-1)}{w^2}\Big) \notag\\
&+ w\sinh(u) \Big( -\frac{9\tanh(u)}{w^2} + \frac{2\sigma}{w} + \mathcal{O}(w^{-3}) \Big) + w \left( \frac{2\sinh^2(u) - \cosh^2(u)}{\cosh(u)} \right) \frac{w^2-1}{w^4} \notag\\
=& - \cosh(u) \mathcal{Q} + \sigma \sinh(u) - \frac{6\sinh^2(u) + 4}{w\cosh(u)} + \mathcal{O}(w^{-2})\,,
\end{align}
where $\mathcal{Q}$ is the positive semi-definite quadratic Hessian from \eqref{eqn:mathcal-Q}.

Note that the primary obstruction here is the dominating term $\sigma\sinh(u)$ when $w$ is large. For $\sigma=0$, we have:
\begin{equation}
\frac{d}{dt} \Phi_{\max}(t) \le \mathscr{L}\Phi(x_t, t) \le - \frac{6\sinh^2(u) + 4}{w\cosh(u)} + \mathcal{O}(w^{-2}).
\end{equation}
By Lemma \ref{lem:height_bound}, $\cosh(u)$ is uniformly bounded from above (and trivially bounded from below by $1$). Therefore, there exists a sufficiently large constant $\bar{C} > 0$ such that whenever $\Phi_{\max}(t) > \bar{C}$, we have $\frac{d}{dt} \Phi_{\max}(t) < 0$. Arguing as in the proof of Lemma \ref{lem:height_bound}, we have
\begin{equation}
\Phi(x,t) \le \max \left\{ \sup_{x \in \Sigma} \Phi(x,0), \bar{C} \right\} := C_\Phi,
\end{equation}
which, since $w \le \Phi$, immediately implies $w(x,t) \le C_2$.

For general $\sigma\in (-2,2)$, Lemma \ref{lem:height_bound} again guarantees that $\sigma\sinh(u)$ is uniformly bounded, and thus $\frac{d}{dt} \Phi_{\max}(t) \le \tilde{C}$. Integrating this from $t=0$ yields:
\begin{equation}\label{eqn:linear_gradient_bound}\notag
w(x,t) \le \Phi(x,t) \le \Phi_{\max}(0) + \tilde{C} t \le C_1(1 + t).\quad \quad \qed \qedhere
\end{equation}
\end{proof}

\begin{remark}
    It will be interesting to find an appropriate test function to prove the uniform gradient estimate for general $\sigma\in (-2,2)$.
\end{remark}

\subsection{The general case $\sigma \in (-2,2)$} We first recall the evolution equations of the gradient function $w$ and the squared norm of the second fundamental form $|A|^2$ under the modified mean curvature flow.

\begin{lemma}\label{lem:evol_w}
Let $u \in C^{2,1}(\Sigma \times [0,T))$ be a smooth solution to the modified mean curvature flow \eqref{eqn:u_t} with $\sigma\in (-2,2)$. Then the gradient function $w$ satisfies the evolution equation:
\begin{align}\label{eqn:evol_w_modified}
(\partial_t - \Delta) w &= -w|A|^2 + 2w^2\tanh(u)H + 2\tanh(u)\langle \nabla w, \mathbf{n} \rangle \notag\\
&\quad  - \frac{w^2-1}{w\cosh^2(u)} - 2w\tanh^2(u) - \frac{2}{w}|\nabla w|^2  - \sigma\tanh(u)(w^2-1)\,,
\end{align}
where $A$ is the second fundamental form of the evolving surface $\Sigma_t$ and $\mathbf{n}= \frac{\partial}{\partial r}$.
\end{lemma}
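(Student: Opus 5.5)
We will derive the evolution equation geometrically, working with the angle function $\Theta = \langle \mathbf{n},\nu\rangle = 1/w$ rather than with the graph formula for $w$. The identity then follows from
\[
w_t=-w^2\Theta_t,\qquad \Delta w=-w^2\Delta\Theta+\frac{2}{w}|\nabla w|^2\cdot\frac{w^2}{w^2}\cdot w^{0},
\]
where more precisely $\Delta(1/\Theta)=-\Theta^{-2}\Delta\Theta+2\Theta^{-3}|\nabla\Theta|^2$ and $|\nabla\Theta|^2=w^{-4}|\nabla w|^2$. The term $-\frac{2}{w}|\nabla w|^2$ in \eqref{eqn:evol_w_modified} comes from this conversion. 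So the main task is to compute $(\partial_t-\Delta)\Theta$ along \eqref{eqn:MMCF-new}.

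\textbf{Spatial part.} Fix an orthonormal frame $\{e_i\}$ on $\Sigma_t$. By \eqref{eqn:conformal-vf},
\[
\nabla_i\Theta=\langle\bar\nabla_{e_i}\mathbf{n},\nu\rangle+\langle\mathbf{n},\bar\nabla_{e_i}\nu\rangle=-\tanh(u)\,\Theta\langle e_i,\mathbf{n}\rangle+h_{ij}\langle\mathbf{n},e_j\rangle.
\]
Differentiating once more, we will use the Codazzi equation, which holds with no curvature correction because $M^3$ has constant curvature, so that $\nabla_ih_{ij}=\nabla_jH$. We will also use \eqref{eqn:conformal-vf} again, $\nabla_i u=\langle e_i,\mathbf{n}\rangle$, and $\nabla_i\tanh(u)=\mathrm{sech}^2(u)\langle e_i,\mathbf{n}\rangle$. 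The result is
\[
\Delta\Theta=\langle\nabla H,\mathbf{n}^T\rangle-|A|^2\Theta+(\text{terms in }\tanh(u),\ \mathrm{sech}^2(u),\ H,\ |\mathbf{n}^T|^2=1-\Theta^2,\ \langle\nabla\Theta,\mathbf{n}^T\rangle).
\]

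\textbf{Time part.} Along the flow $\partial_t F=-(H-\sigma)\nu$, we have $\partial_t\nu=\nabla H$, since the constant $\sigma$ drops out under differentiation. This gives
\[
\partial_t\Theta=\langle\bar\nabla_{\partial_tF}\mathbf{n},\nu\rangle+\langle\mathbf{n},\nabla H\rangle=-(H-\sigma)\tanh(u)(1-\Theta^2)+\langle\nabla H,\mathbf{n}^T\rangle.
\]
The $\langle\nabla H,\mathbf{n}^T\rangle$ terms cancel in $(\partial_t-\Delta)\Theta$, which leaves only zeroth- and first-order quantities. The $\sigma$ contribution, $\sigma\tanh(u)(1-\Theta^2)$, becomes $-\sigma\tanh(u)(w^2-1)$ after multiplying by $-w^2$, which matches the stated formula.

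\textbf{Main obstacle.} The hardest step is the bookkeeping in $\Delta\Theta$: collecting the $\tanh^2(u)$, $\mathrm{sech}^2(u)$ and $H\tanh(u)$ terms and converting them into the stated coefficients. We will use two identities for this:
\[
\langle\nabla w,\mathbf{n}\rangle=\langle\nabla w,\mathbf{n}^T\rangle,\qquad |\mathbf{n}^T|^2=(w^2-1)/w^2.
\]
The target terms are $2w^2\tanh(u)H$, $2\tanh(u)\langle\nabla w,\mathbf{n}\rangle$, $-\frac{w^2-1}{w\cosh^2(u)}$, and $-2w\tanh^2(u)$. Two points need care: the sign convention linking $h_{ij}$ to $\bar\nabla\nu$, which is fixed by \eqref{eqn:2ndFF}, and the orientation of $\nu$ in \eqref{eqn:normal-vf-to-S_t}.

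As a sanity check we will evaluate on the slices $\Sigma(r)$. There $w=1$, $|A|^2=2\tanh^2(r)$ and $H=2\tanh(r)$, so the right-hand side is $-2\tanh^2+4\tanh^2-2\tanh^2=0$, as it must be. If the geometric computation becomes unwieldy, the fallback is to take the graph formula \eqref{eqn:w_t-new}, together with \eqref{eqn:w_ij} and the Laplacian $\Delta=g^{ij}(\partial_{ij}-\Gamma_{ij}^k\partial_k)$ written in graph coordinates, and compare directly.
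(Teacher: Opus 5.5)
Your proposal is correct and has the same skeleton as the paper's proof. You compute $(\partial_t-\Delta)\Theta$, then use $(\partial_t-\Delta)w=-w^2(\partial_t-\Delta)\Theta-\frac{2}{w}|\nabla w|^2$ and convert term by term.

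The difference is that the paper does not derive the $\Theta$-equation \eqref{eqn:evo-Theta}. It quotes \cite[Theorem 3.6]{HLZ2020}, which treats the unforced flow, and asserts that the $\sigma$-term follows from that proof. You derive it instead. Your remark that $\bar\nabla_t\nu=\nabla H$ does not involve $\sigma$ is exactly the justification the paper leaves implicit: it means $\sigma$ enters only through $\langle\bar\nabla_{\partial_tF}\mathbf{n},\nu\rangle$.

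The bookkeeping you flag does close:
\begin{itemize}
\item In an orthonormal frame, \eqref{eqn:conformal-vf} and the Gauss formula give $\nabla_i\nabla_j u=\tanh(u)(\delta_{ij}-u_iu_j)-\Theta h_{ij}$.
\item The one step you did not name is eliminating $h(\mathbf{n}^T,\mathbf{n}^T)$ via your own gradient formula, $h(\mathbf{n}^T,\mathbf{n}^T)=\langle\nabla\Theta,\mathbf{n}^T\rangle+\tanh(u)\Theta(1-\Theta^2)$. This is what produces the coefficient $2$ in both $2\tanh(u)\langle\nabla\Theta,\mathbf{n}\rangle$ and $2\tanh^2(u)\Theta$.
\end{itemize}
One then gets
\[
\Delta\Theta=\langle\nabla H,\mathbf{n}^T\rangle-|A|^2\Theta+\tanh(u)H(1+\Theta^2)-2\tanh(u)\langle\nabla\Theta,\mathbf{n}^T\rangle-\frac{\Theta(1-\Theta^2)}{\cosh^2(u)}-2\tanh^2(u)\Theta.
\]
Combining this with your $\partial_t\Theta$ reproduces \eqref{eqn:evo-Theta} exactly. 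In particular, the $H$-terms give $-\tanh(u)H\bigl[(1-\Theta^2)+(1+\Theta^2)\bigr]=-2\tanh(u)H$.

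So your route gives a self-contained proof, including the forcing term, at the cost of about a page of computation. The paper's proof is a two-line reduction resting on the citation.

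Do not use your fallback as a cross-check, for two reasons.
\begin{itemize}
\item The lemma's formula holds for $\partial_t$ taken along the normal velocity, as in your computation and in \cite{HLZ2020}. By contrast, \eqref{eqn:w_t-new} differentiates at fixed $x\in\Sigma$ in the graph parametrization. That adds a tangential transport term $u_t\langle\nabla w,\mathbf{n}^T\rangle$.
\item \eqref{eqn:u_t} prescribes normal speed $u_t\Theta=-(H-\sigma)/w^2$, whereas the graph form of \eqref{eqn:MMCF-new} is $u_t=-(H-\sigma)w$.
\end{itemize}
So that formula will not match the lemma term by term.
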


\begin{proof}
By \cite[Theorem 3.6]{HLZ2020} and its proof, the evolution equation for $\Theta = 1/w$ under the modified mean curvature flow is given by:
\begin{align}\label{eqn:evo-Theta}
(\partial_t - \Delta)\Theta &= |A|^2\Theta - 2\tanh(u)H + 2\tanh(u)\langle\nabla\Theta, \mathbf{n}\rangle \notag\\
&\quad + \frac{\Theta(1-\Theta^2)}{\cosh^2(u)} + 2\tanh^2(u)\Theta + \sigma\tanh(u)(1 - \Theta^2).
\end{align}
Since $w = \Theta^{-1}$, we have $\nabla w = -w^2 \nabla \Theta$ and:
\begin{equation}\notag
(\partial_t - \Delta)w = -w^2(\partial_t - \Delta)\Theta - \frac{2}{w}|\nabla w|^2.
\end{equation}
Multiplying the right-hand side of \eqref{eqn:evo-Theta} by $-w^2$ term-by-term, we obtain:
\begin{align*}
-w^2\left(|A|^2 w^{-1}\right) &= -w|A|^2, \\
-w^2\left(-2\tanh(u)H\right) &= 2w^2\tanh(u)H, \\
-w^2\left(2\tanh(u)\langle -w^{-2}\nabla w, \mathbf{n} \rangle\right) &= 2\tanh(u)\langle \nabla w, \mathbf{n} \rangle, \\
-w^2\left(\frac{w^{-1}(1-w^{-2})}{\cosh^2(u)}\right) &= -\frac{w^2-1}{w\cosh^2(u)}, \\
-w^2\left(2\tanh^2(u)w^{-1}\right) &= -2w\tanh^2(u), \\
-w^2\left(\sigma\tanh(u)(1-w^{-2})\right) &= -\sigma\tanh(u)(w^2-1)\,,
\end{align*}
which yields \eqref{eqn:evol_w_modified}.
\end{proof}

\begin{lemma}\label{lem:evol_A}
Let $u \in C^{2,1}(\Sigma \times [0,T))$ be a smooth solution to the modified mean curvature flow \eqref{eqn:u_t} with $\sigma\in (-2,2)$. Then the squared norm of the second fundamental form $|A|^2$ of the evolving surface $\Sigma_t$ satisfies the following evolution equation:
\begin{align}
\left(\frac{\partial}{\partial t} - \Delta\right) |A|^2 &= -2|\nabla A|^2 + 2|A|^4 + 4(|A|^2 - H^2) -2\sigma \text{tr}(A^3) - 2\sigma H. \label{eqn:evol_A2_mod}
\end{align}
\end{lemma}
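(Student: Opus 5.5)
The plan is to treat the modified flow \eqref{eqn:MMCF-new} as a normal flow $\partial_t F = -f\nu$ with speed $f = H-\sigma$, and to track separately the terms coming from $H$ and those coming from the constant $\sigma$. The ambient space $M^3$ has constant sectional curvature $K=-1$, so $\bar R_{\alpha\beta\gamma\delta} = -(\bar g_{\alpha\gamma}\bar g_{\beta\delta} - \bar g_{\alpha\delta}\bar g_{\beta\gamma})$. In particular the curvature terms $\bar R(\nu,\cdot,\nu,\cdot)$ restricted to $\Sigma_t$ equal $-g_{ij}$, and $\bar\nabla \bar R = 0$. This makes all ambient curvature contributions explicit.

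\textbf{Step 1: general variation formulas.} For a normal variation with speed $f$ in a space form of curvature $K$, I will use
\[
\partial_t g_{ij} = -2f h_{ij}, \qquad \partial_t h_{ij} = \nabla_i\nabla_j f - f\,h_{ik}h^k{}_j + K f\, g_{ij}.
\]
The sign of the last term has to be fixed against the paper's convention $h_{ij} = -\langle\bar\nabla_{F_i}F_j,\nu\rangle$. I will check it on the equidistant slices $\Sigma(r)$: there $h_{ij} = \tanh(r)\,g_{ij}$, $f = 2\tanh r - \sigma$, and $r' = -f$ by \eqref{eqn:ODE-barrier}.

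\textbf{Step 2: the $H$-part of the speed.} With $f=H$, I will use Simons' identity in a space form,
\[
\nabla_i\nabla_j H = \Delta h_{ij} - H h_{ik}h^k{}_j + |A|^2 h_{ij} + nK\, h_{ij} - KH g_{ij}, \qquad n=2.
\]
Combined with Step 1, this reproduces Huisken's computation:
\[
(\partial_t - \Delta)|A|^2 = -2|\nabla A|^2 + 2|A|^4 + 2nK|A|^2 - 2KH^2 + (\text{metric term}).
\]
Doing the bookkeeping carefully, including the contribution of $\partial_t g^{ij}$ in $|A|^2 = g^{ik}g^{j\ell}h_{ij}h_{k\ell}$, should give $2|A|^4 + 4(|A|^2 - H^2)$ once $K=-1$ and $n=2$ are inserted. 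I will cross-check the constants on umbilic slices, where $|A|^2 = 2\tanh^2 r$ and $H = 2\tanh r$.

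\textbf{Step 3: the $\sigma$-part of the speed.} Since $\sigma$ is constant, $\nabla\nabla\sigma = 0$, so the $-\sigma$ part of $f$ contributes only zeroth-order terms:
\[
\partial_t h_{ij} \ni \sigma h_{ik}h^k{}_j - K\sigma g_{ij}, \qquad \partial_t g^{ij} \ni -2\sigma h^{ij}.
\]
Contracting these into $\partial_t |A|^2$ gives
\[
2\sigma\,\mathrm{tr}(A^3) - 4\sigma\,\mathrm{tr}(A^3) - 2K\sigma H = -2\sigma\,\mathrm{tr}(A^3) + 2\sigma H\cdot(-K)\cdot(\text{sign}),
\]
and with the correct sign from Step 1 this should equal $-2\sigma\,\mathrm{tr}(A^3) - 2\sigma H$. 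I will confirm the overall sign of the $\sigma$ terms on the slices, using $\frac{d}{dt}\big(2\tanh^2 r\big)$ along \eqref{eqn:ODE-barrier}.

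\textbf{Main obstacle.} The delicate point is getting the signs consistent: the orientation of $\nu$, the sign convention for $h_{ij}$, and the sign of the ambient curvature term. Errors here flip $4(|A|^2-H^2)$ or $-2\sigma H$. I therefore plan to rely on the exact ODE check on the totally umbilic family $\Sigma(r)$ for every constant, in addition to the cited evolution equations of \cite{HLZ2020,HLZ2023}.
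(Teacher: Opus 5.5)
Your route is the paper's route: split the speed $H-\sigma$ and treat the constant part as a zeroth-order correction to the MCF computation. The step that cannot be carried out is the end of Step 3, where an unspecified factor ``(sign)'' is inserted to turn $-2K\sigma H$ into $-2\sigma H$.

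Your Step 1 formula is already correct with the paper's convention $h_{ij}=-\langle\bar\nabla_{F_i}F_j,\nu\rangle$. On $\Sigma(r)$ one has $h_{ij}=\sinh r\cosh r\,(g_0)_{ij}$, $r'=-f$, and $f$ is spatially constant. Hence $\partial_t h_{ij}=-f(1+\tanh^2 r)g_{ij}=-f\,h_{ik}h^k{}_j+Kf\,g_{ij}$ with $K=-1$, and since this holds for every $r$ it pins both coefficients. Feeding the constant part $-\sigma$ of $f$ into Step 1 leaves no freedom:
\[
\delta_\sigma|A|^2=-4\sigma\,\mathrm{tr}(A^3)+2\sigma\,\mathrm{tr}(A^3)-2K\sigma H=-2\sigma\,\mathrm{tr}(A^3)+2\sigma H .
\]
Your own slice test says the same. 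Along \eqref{eqn:ODE-barrier}, $\frac{d}{dt}(2\tanh^2 r)=4\tanh r\,\text{sech}^2 r\,(\sigma-2\tanh r)$, whose $\sigma$-part is $4\sigma\tanh r-4\sigma\tanh^3 r=2\sigma H-2\sigma\,\mathrm{tr}(A^3)$.

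So the identity as stated is false, and no proof of it exists. The quickest counterexample is the static slice $\Sigma(r_\sigma)$. There $H=\sigma$, $\nabla A=0$, and the left side vanishes. The stated right side, however, equals $8t^4-8t^2-4\sigma t^3-4\sigma t=-4\sigma^2\neq 0$ with $t=\tanh r_\sigma=\sigma/2$, for every $\sigma\neq0$. What your Steps 1 and 3 actually prove is
\[
\Big(\frac{\partial}{\partial t}-\Delta\Big)|A|^2=-2|\nabla A|^2+2|A|^4+4(|A|^2-H^2)-2\sigma\,\mathrm{tr}(A^3)+2\sigma H .
\]
The paper's proof makes the matching slip. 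It sets $\overline{R}_{0i0j}=-g_{ij}$ and writes $\delta_\sigma h_{ij}=\sigma h_{il}h^l{}_j-\sigma g_{ij}$, whereas the curvature contribution is $+\sigma g_{ij}$. The sign is harmless for the later use in Lemma \ref{lem:A2_time_dependent_bound}, where the $\sigma H$ term is absorbed as lower order, but it makes the statement itself wrong.

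Separately, the Simons identity in your Step 2 has the ambient terms with the wrong sign. In a space form of curvature $K$ it reads $\nabla_i\nabla_j H=\Delta h_{ij}-Hh_{ik}h^k{}_j+|A|^2h_{ij}-nKh_{ij}+KHg_{ij}$. You can test this on the minimal Clifford torus in $S^3$, where $\nabla A=0$, $H=0$, $|A|^2=2$; your version would force $4h_{ij}=0$. Used as you wrote it, the bookkeeping gives $2|A|^4+2nK|A|^2$ instead of $2|A|^4-2nK|A|^2+4KH^2=2|A|^4+4(|A|^2-H^2)$. Your planned umbilic cross-check cannot see the difference, because the discrepancy $4K(H^2-n|A|^2)$ vanishes identically on umbilic surfaces. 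Either cite \cite[Theorem 2.2]{HLZ2020} for the $\sigma$-free part, as the paper does, or fix the sign and check the curvature terms on a non-umbilic example.
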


\begin{proof}
By \cite[Theorem 2.2]{HLZ2020}, the evolution equation for $|A|^2$ under the standard mean curvature flow is given by:
\begin{align}\label{eqn:evo-2nd-FF-MCF}
\left(\frac{\partial}{\partial t} - \Delta\right) |A|^2 &= -2|\nabla A|^2 + 2|A|^4 + 4(|A|^2 - H^2).
\end{align}

Following Huisken's \cite[Lemma 3.3 and Theorem 3.4]{H86}, under a normal variation $\partial_t F = f\nu$ for a smooth function $f$, the induced metric $g_{ij}$, its inverse $g^{ij}$, and the components of the second fundamental form of the evolving surface satisfy the evolution equations:
\begin{align*}
\frac{\partial}{\partial t} g_{ij} &= 2f h_{ij}, \\
\frac{\partial}{\partial t} g^{ij} &= -2f h^{ij},\\
\frac{\partial}{\partial t} h_{ij} &= -\nabla_i \nabla_j f + f h_{il}h^l_j + f\overline{R}_{0i0j},
\end{align*}
where $\overline{R}_{0i0j} = -\langle \bar{R}(\nu, \partial_i F)\partial_j F, \nu \rangle$ denotes the ambient Riemann curvature tensor evaluated in the normal direction.

Compared with \eqref{eqn:evo-2nd-FF-MCF}, the additional contributions to the evolution equation of $|A|^2$ under the modified mean curvature flow arise from $\frac{\partial}{\partial t}$ acting on $|A|^2$. Because the time derivative operator is linear with respect to the normal variation speed, the evolution under the modified mean curvature flow $\partial_t F = -(H-\sigma)\nu$ decomposes into the standard mean curvature flow contribution ($f = -H$) and the contribution from the constant forcing term ($f = \sigma$). For this forcing contribution, denoting its corresponding variation by $\delta_\sigma$, we have:
\begin{align}
\delta_\sigma g^{ij} &= -2\sigma h^{ij}, \\
\delta_\sigma h_{ij} &=  \sigma h_{il}h^l_j + \sigma \overline{R}_{0i0j} = \sigma h_{il}h^l_j - \sigma g_{ij},
\end{align}
where we used $\overline{R}_{0i0j} = -g_{ij}$ since the ambient space $M^3$ is a Fuchsian manifold equipped with a hyperbolic metric. We then evaluate the variation $\delta_\sigma |A|^2 = \delta_\sigma (g^{ik}g^{jl}h_{ij}h_{kl})$:
\begin{align}\label{eqn:cont-drift-var}
\delta_\sigma |A|^2 &= 2 (\delta_\sigma g^{ik}) h_i^l h_{kl} + 2 g^{ik}g^{jl} (\delta_\sigma h_{ij}) h_{kl} \notag\\
&= 2(-2\sigma h^{ik})(h^2)_{ik} + 2h^{ij}\big( \sigma (h^2)_{ij} - \sigma g_{ij} \big) \notag\\
&= -4\sigma \text{tr}(A^3) + 2\sigma \text{tr}(A^3) - 2\sigma H \notag\\
&= -2\sigma \text{tr}(A^3) - 2\sigma H.
\end{align}
Combining \eqref{eqn:cont-drift-var} with \eqref{eqn:evo-2nd-FF-MCF} yields \eqref{eqn:evol_A2_mod}.
\end{proof}

We next establish a polynomial growth estimate for the second fundamental form of the evolving surfaces, following the ideas from \cite{EH89}.

\begin{lemma} \label{lem:A2_time_dependent_bound}
Let $u \in C^{2,1}(\Sigma \times [0,T))$ be a smooth solution to the modified mean curvature flow \eqref{eqn:u_t} with $\sigma\in (-2,2)$. There exists a constant $C > 0$, depending only on the background metric $g_0$ on $\Sigma$, the $C^0$ bound of $u$ in Lemma \ref{lem:height_bound}, the constant $\sigma$, and the initial gradient bound $\|w(\cdot, 0)\|_\infty$, such that the second fundamental form of the evolving surface $\Sigma_t$ satisfies
\begin{equation}\notag
|A(x,t)| \le C(1+t)^{4}, \qquad \forall (x,t) \in \Sigma \times [0,\infty).
\end{equation}
\end{lemma}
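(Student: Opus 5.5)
We follow the Ecker--Huisken strategy of \cite{EH89}: we multiply $|A|^2$ by a suitable power of the gradient function and apply the maximum principle. The a priori input is the linear gradient bound $w\le C_1(1+t)$ from Theorem \ref{thm:gradient_bound-sigma-0} and the uniform height bound of Lemma \ref{lem:height_bound}, under which $\tanh(u)$, $\cosh(u)$ are controlled. The parameters are chosen so that the final exponent is $4$: we aim for $|A|^2 \le C w^{p}$ with $w\lesssim (1+t)$, giving $|A|\lesssim(1+t)^{p/2}$, so $p=8$ suffices.

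Concretely, we consider the test function $G=|A|^2\varphi(w^2)$ with $\varphi(s)=\frac{s}{k-\lambda s}$ as in \cite{EH89}. Since $w$ may grow, we first normalize by the time-dependent bound: set $k(t)=2\sup_{\Sigma\times[0,t]}w^2\le C(1+t)^2$, or more simply work on an interval $[0,T_0]$ with the constant $k=2C_1^2(1+T_0)^2$.

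The first step is to convert Lemma \ref{lem:evol_w} into an evolution inequality for $v=w^2$, of the form
$$(\partial_t-\Delta)v\le -2v|A|^2-\tfrac32 v^{-1}|\nabla v|^2+C v^{3/2}|A|+2\tanh(u)\langle\nabla v,\mathbf n\rangle+Cv^{3/2}.$$
Here the term $2w^2\tanh(u)H$ is bounded using $|H|\le\sqrt2|A|$, and the $\sigma$-term contributes $Cv$. Combining this with Lemma \ref{lem:evol_A}, we bound $|2\sigma\mathrm{tr}(A^3)|\le C|A|^3$, $|2\sigma H|\le C|A|$ and $4(|A|^2-H^2)\le 4|A|^2$. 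Following the computation in \cite{EH89}, we then obtain at an interior maximum of $G$
$$(\partial_t-\Delta)G\le -2\lambda\frac{G^2}{v}\cdot\frac{1}{\varphi}+\text{(lower order)}-\ \text{gradient term}\cdot\langle\nabla G,\cdot\rangle .$$
The good term $-\frac{2\lambda}{k}G^2$ comes from $\varphi'$ multiplying $-2v|A|^2$, after the $+2|A|^4\varphi$ term is absorbed.

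The main obstacle is the cubic term $C|A|^3\varphi$ together with the $v^{3/2}|A|$ terms. These are not present in the Ecker--Huisken setting, and they must be absorbed by the good quartic term $-c\,G^2/k$ using Young's inequality. That absorption costs powers of $k$, i.e., powers of $(1+t)$. We expect to obtain at the maximum point an ODE inequality of the form
$$\frac{d}{dt}G_{\max}\le -\frac{c}{k}G_{\max}^2+C k^{m}$$
for some explicit $m$, the gradient cross-terms $\langle\nabla G,\nabla v\rangle$ being handled as in \cite{EH89} by using $\nabla G=0$. This ODE yields $G\le C k^{(m+1)/2}$. Since $\varphi\ge v/k\ge 1/k$, we get $|A|^2\le kG$, and hence a bound polynomial in $(1+t)$.

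We then track exponents. Choosing $\lambda$ small enough that $\varphi\le 2v/k\le 1$ should give $m\le 3$ in $k\sim(1+t)^2$, leading to $|A|^2\le C(1+t)^{8}$, i.e.\ $|A|\le C(1+t)^4$. If the exponent bookkeeping comes out worse, we would instead use the local-in-time version: apply the estimate on $[t-1,t]$ with $w$ bounded there by $C(1+t)$, and use the scale-invariant interior curvature estimate, which yields $|A|\le C\sup w^{4}$ on unit time intervals. This still gives $(1+t)^4$. Finally, since the constants depend only on $g_0$, $\sigma$, the $C^0$ bound and $\|w(\cdot,0)\|_\infty$, and Corollary \ref{cor:exponential_bound} gives existence on $[0,\infty)$, the estimate holds for all $t\ge0$.
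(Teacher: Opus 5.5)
Your approach is essentially the paper's: fix $T$, use the linear bound $w\le C_1(1+T)$ to normalize a weight in $w^2$, apply the maximum principle to $|A|^2$ times that weight, and absorb the cubic, $w^2\tanh(u)H$ and $\langle\nabla w,\mathbf n\rangle$ terms by Young's inequality. The paper simply uses $Q=|A|^2w^2e^{c_Tw^2}$ with $c_T=1/(2C_T^2)$ instead of the Ecker--Huisken weight $w^2/(k-\lambda w^2)$. Your bookkeeping needs fixing, although the conclusion survives: since $s\varphi'-\varphi=\lambda\varphi^2$, the quartic term is exactly $-2\lambda G^2$, which even yields $|A|\le C(1+t)^2$; if you keep only your weaker $-\frac{c}{k}G^2$, the term $Ck^{1/2}G^{3/2}$ forces $m=5$ rather than $m\le 3$, which gives exactly $|A|^2\le kG\le Ck^4\sim(1+t)^8$; and the term $2\tanh(u)\varphi'|A|^2\langle\nabla v,\mathbf n\rangle$ is not removed by $\nabla G=0$ but must be absorbed via Young into the negative Ecker--Huisken gradient term $-\frac{\lambda k}{2(k-\lambda v)^3}|A|^2|\nabla v|^2$, just as the paper absorbs it into $-c_T\Phi|A|^2|\nabla w|^2$.
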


\begin{proof}
Fix an arbitrary time $T > 0$. By Theorem \ref{thm:gradient_bound-sigma-0}, on $\Sigma \times [0, T]$ we have $w \leq C_1(1+T) := C_T$. We define the test function $Q = |A|^2 \Phi(w)$ with $\Phi(w) = w^2 e^{c_T w^2}$, where $c_T = \frac{1}{2C_T^2}$ such that $c_T w^2 \le \frac{1}{2}$ on $\Sigma \times [0, T]$.

We first compute:
\begin{align}
\Phi'(w) &= \left(\frac{2}{w} + 2c_T w\right) \Phi(w), \label{eqn:phi_prime-new} \\
\Phi''(w) &= \left(-\frac{2}{w^2} + 2c_T\right) \Phi(w) + \left(\frac{2}{w} + 2c_T w\right)^2 \Phi(w). \label{eqn:phi_double_prime-new}
\end{align}

Using \eqref{eqn:evol_w_modified}, the evolution equation of $\Phi(w)$ is given by:
\begin{align}\label{eqn:evol_phi}
(\partial_t - \Delta) \Phi &= \Phi' (\partial_t - \Delta) w - \Phi''|\nabla w|^2 \notag\\
&= -w\Phi'|A|^2 - \left(\frac{2}{w}\Phi' + \Phi''\right)|\nabla w|^2 + \Phi'\mathcal{E}_w,
\end{align}
where 
$$\mathcal{E}_w = 2w^2\tanh(u)H + 2\tanh(u)\langle \nabla w, \mathbf{n} \rangle  - \frac{w^2-1}{w\cosh^2(u)} - 2w\tanh^2(u) - \sigma\tanh(u)(w^2-1). $$ 
Using \eqref{eqn:evol_phi} and Lemma \ref{lem:evol_A}, the evolution equation of $Q$ is given by:
\begin{align}\label{eqn:evo-Q}
(\partial_t - \Delta) Q =& \Phi (\partial_t - \Delta)|A|^2 + |A|^2 (\partial_t - \Delta)\Phi - 2\nabla|A|^2 \cdot \nabla \Phi\\
=&\Phi \left(-2|\nabla A|^2 + 2|A|^4 + 4(|A|^2 - H^2) -2\sigma \text{tr}(A^3) - 2\sigma H\right) \notag\\
&+ |A|^2 \left(-w\Phi'|A|^2 - \left(\frac{2}{w}\Phi' + \Phi''\right)|\nabla w|^2 + \Phi'\mathcal{E}_w\right) - 2\nabla|A|^2 \cdot \nabla \Phi.\notag
\end{align}
Because $\Sigma \times [0, T]$ is compact, $Q$ achieves its maximum at some point $(x_0, t_0)$ and without loss of generality we can assume $t_0 \in (0, T]$. At $(x_0, t_0)$, we have $\nabla Q = 0$ (or equivalently $\nabla|A|^2 = -|A|^2 \frac{\Phi'}{\Phi}\nabla w$ or $\nabla|A| = -\frac{|A|}{2}\frac{\Phi'}{\Phi}\nabla w$) and $(\partial_t - \Delta) Q \ge 0$. Substituting $\nabla|A|^2= -|A|^2 \frac{\Phi'}{\Phi}\nabla w$ into the cross-term in \eqref{eqn:evo-Q} at $(x_0, t_0)$ yields:
\begin{equation}\notag
-2\nabla|A|^2 \cdot \nabla\Phi = -2\left(-|A|^2 \frac{\Phi'}{\Phi}\nabla w\right) \cdot (\Phi' \nabla w) = 2|A|^2 \frac{(\Phi')^2}{\Phi}|\nabla w|^2.
\end{equation}
Applying Kato's inequality $|\nabla|A|| \le |\nabla A|$, we have
\begin{equation}\notag
-2\Phi|\nabla A|^2 \le -2\Phi|\nabla|A||^2 = -2\Phi\left(\frac{|A|^2}{4}\frac{(\Phi')^2}{\Phi^2}|\nabla w|^2\right) = -\frac{1}{2}|A|^2 \frac{(\Phi')^2}{\Phi}|\nabla w|^2.
\end{equation}
We collect all gradient terms from \eqref{eqn:evo-Q} into a single expression $\mathcal{G}$:
\begin{align*}
\mathcal{G} &\le -\frac{1}{2}|A|^2 \frac{(\Phi')^2}{\Phi}|\nabla w|^2 - |A|^2 \left(\frac{2}{w}\Phi' + \Phi''\right)|\nabla w|^2 + 2|A|^2 \frac{(\Phi')^2}{\Phi}|\nabla w|^2 \notag\\
&= |A|^2 |\nabla w|^2 \left[ \frac{3}{2}\frac{(\Phi')^2}{\Phi} - \frac{2}{w}\Phi' - \Phi'' \right]\\
&= |A|^2 |\nabla w|^2 \Phi \left[ \frac{3}{2}y^2 - \frac{2}{w}y - \left(-\frac{2}{w^2} + 2c_T + y^2\right) \right]\\
&= -2c_T \Phi(1 - c_T w^2) |A|^2 |\nabla w|^2 \leq -c_T \Phi |A|^2 |\nabla w|^2\,,
\end{align*}
where $y = \frac{2}{w} + 2c_T w$ and we used \eqref{eqn:phi_prime-new}, \eqref{eqn:phi_double_prime-new} and $1 - c_T w^2 \geq \frac{1}{2}$ by the choice of $c_T$. Therefore, at the maximum $(x_0, t_0)$, using again \eqref{eqn:phi_prime-new} and \eqref{eqn:evo-Q}, we have
\begin{equation}\label{eqn:evol-Q-simplified}
0 \leq (\partial_t - \Delta) Q \le -2c_T w^2 \Phi |A|^4 - c_T\Phi |A|^2 |\nabla w|^2 + \Phi'\mathcal{E}_w|A|^2 + \Phi \mathcal{O}(|A|^3).
\end{equation}

Now we re-write
\begin{equation}\notag
\mathcal{E}_w = 2\tanh(u)\langle \nabla w, \mathbf{n} \rangle + \tilde{\mathcal{E}}_w,
\end{equation}
where $\tilde{\mathcal{E}}_w = 2w^2\tanh(u)H - \frac{w^2-1}{w\cosh^2(u)} - 2w\tanh^2(u) - \sigma\tanh(u)(w^2-1)$. Because $|u| \le C_0$ by Lemma \ref{lem:height_bound} and $|\mathbf{n}| \le 1$, we have
\begin{align}\label{eqn:young-ineq}
\left| 2\tanh(u) \Phi' |A|^2 \langle \nabla w, \mathbf{n} \rangle \right| \leq & C \Phi' |A|^2 |\nabla w| =  C \left(\frac{2}{w} + 2c_T w \right)\Phi |A|^2 |\nabla w| \notag\\
 \leq & 3C \Phi |A|^2 |\nabla w| = \left( \sqrt{2c_T \Phi} |A| |\nabla w| \right) \left( \frac{3C}{\sqrt{2c_T}} \sqrt{\Phi} |A| \right)\notag\\
\leq & c_T \Phi |A|^2 |\nabla w|^2 + \frac{9C^2}{4c_T} \Phi |A|^2.\notag
\end{align}
where we used \eqref{eqn:phi_prime-new}, $c_T w^2 \le \frac{1}{2}$ (so that $2c_T w = \frac{2c_T w^2}{w} \le \frac{1}{w} \leq 1$ since $w \ge 1$) and Young's inequality. The term $c_T \Phi |A|^2 |\nabla w|^2$ above cancels with the $-c_T\Phi |A|^2 |\nabla w|^2$ term in \eqref{eqn:evol-Q-simplified}. 

Now note that the leading term in $\tilde{\mathcal{E}}_w$ is $2w^2\tanh(u)H = \mathcal{O}(w^2|A|)$. Using \eqref{eqn:phi_prime-new} and multiplying $\tilde{\mathcal{E}}_w$ by $\Phi'|A|^2$ yields a bound by 
$$\Phi \left(\frac{2}{w} + 2c_T w\right) \mathcal{O}(w^2|A|^3) = \Phi \mathcal{O}((w + c_T w^3)|A|^3).$$ Since $w \le C_T$ and $c_T = \frac{1}{2C_T^2}$, we have $w + c_T w^3 \le C_T + \frac{1}{2}C_T = \frac{3}{2}C_T$. Thus, \eqref{eqn:evol-Q-simplified} at the maximum point $(x_0, t_0)$ reduces to:
\begin{equation}\notag
0 \le -2c_T w^2 \Phi |A|^4 + C_3 C_T\Phi |A|^3 + \frac{9C^2}{4c_T} \Phi |A|^2,
\end{equation}
where $C_3 > 0$ is a constant depending only on $|u|\leq C_0$ and $\sigma\in (-2,2)$. Therefore, using again $w\geq 1$, we have $|A(x_0, t_0)| \leq C_4 C_T^3$ (since $1/(2c_T) = C_T^2$).
Noting that $\Phi(w) \leq C_T^2 e^{1/2}$, we have
\begin{equation}
|A(x,t)|^2 \leq Q_{\max} = |A(x_0, t_0)|^2 \Phi(w(x_0, t_0)) \le \left(C_4 C_T^3\right)^2 (C_T^2 e^{1/2}) = C_5 (1+T)^{8}\,,
\end{equation}
 for all $(x,t) \in \Sigma \times [0, T]$. Since $T$ was chosen arbitrarily and the inequality holds for all $t \le T$, we conclude $|A(x,t)|^2 \le C_5(1+t)^{8}$, completing the proof.
\end{proof}
\begin{remark}\label{rmk:uniform-higher-order-bound}
    The proof of Lemma \ref{lem:A2_time_dependent_bound} also yields that if the gradient function satisfies a time-independent bound $w \le C_1$, then the norm of the second fundamental form satisfies a uniform bound $|A| \le C$ along the flow. It then follows directly from the standard induction argument of Huisken \cite[Lemma 7.2]{H86} (see also \cite{H84}) that all higher-order covariant derivatives of $A$ are uniformly bounded (noting that the constant normal forcing $\sigma \nu$ in the flow only generates extra terms of the form $\sigma \sum_{i+j=m} \nabla^i A * \nabla^j A * \nabla^m A$ and $\sigma \nabla^m \overline{R}$ in the evolution equation for $|\nabla^m A|^2$).
\end{remark}

\subsubsection{Uniform gradient bound via blow-up}
We establish a uniform gradient bound by using the exponential $C^0$ convergence of the flow to the equidistant surface $\Sigma(r_{\sigma})$ by Corollary \ref{cor:exponential_bound}, combined with a Type II curvature blow-up argument.

\begin{theorem}\label{thm:gradient_bound_general}
Let $u \in C^{2,1}(\Sigma \times [0,\infty))$ be a smooth solution to the modified mean curvature flow \eqref{eqn:u_t} with $\sigma\in (-2,2)$. There exists a constant $C > 0$, depending only on the background metric $g_0$ on $\Sigma$, the $C^0$ bound of $u$ in Lemma \ref{lem:height_bound}, the constant $\sigma$, and the initial gradient bound $\|w(\cdot, 0)\|_\infty$, such that:
\begin{equation}\notag
w(x,t) \le C\,, \qquad \forall (x,t) \in \Sigma \times [0,\infty).
\end{equation}
\end{theorem}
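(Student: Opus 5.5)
The plan is to argue by contradiction with a blow-up, using the exponential $C^0$ convergence from Lemma \ref{lem:height_bound} and Corollary \ref{cor:exponential_bound}. The key geometric input is that a surface which is $C^0$-close to the equidistant slice $\Sigma(r_\sigma)$, has bounded curvature, and yet has a nearly vertical tangent plane somewhere, is impossible. Concretely, I would first prove a conditional statement: if $|A|\le K$ on $\Sigma\times[t_1,\infty)$ for some $K$, then $w$ is uniformly bounded. The reasoning is as follows. By Lemma \ref{lem:height_bound}, $\operatorname{osc}_{\Sigma} u(\cdot,t)\le 2Me^{-\frac{4-\sigma^2}{2}t}$. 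If $w(x_t,t)$ were large, the tangent plane at $F(x_t,t)$ would be almost vertical. A curvature bound $K$ gives a graphical patch over that tangent plane of radius $\sim 1/K$ with controlled $C^2$ norm, so the surface would leave the slab of width $\operatorname{osc}u$ by an amount $\gtrsim c(K)>0$ along the vertical direction. Since $\operatorname{osc}u\to 0$, this is a contradiction for large $t$. On the compact interval $[0,t_1]$, Theorem \ref{thm:gradient_bound-sigma-0} already bounds $w$.

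It therefore suffices to show $\sup_{t}\max_\Sigma|A|<\infty$. Suppose not. Lemma \ref{lem:A2_time_dependent_bound} says $|A|$ grows at most polynomially, so it is finite at each time, and blow-up can only occur as $t\to\infty$. I would pick a Hamilton-type Type II point-picking sequence $(x_k,t_k)$ with $t_k\to\infty$ and $\lambda_k:=|A|(x_k,t_k)\to\infty$, maximizing $|A|$ over $\Sigma\times[t_k-1,t_k]$ up to a factor $2$. Possibly I would instead use $\max_{[0,t]}|A|$, together with the polynomial bound, to guarantee that $|A|\le 2\lambda_k$ on parabolic backward cylinders of size $\lambda_k^{-2}\cdot\lambda_k^{2}\delta$ for large $k$. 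I would then rescale: $\tilde F_k(\cdot,s)=\lambda_k\big(F(\cdot,t_k+\lambda_k^{-2}s)-F(x_k,t_k)\big)$ in normal coordinates. In the rescaled picture the ambient metric becomes flat, the forcing term $\sigma$ scales to $\sigma/\lambda_k\to 0$, and curvature is bounded by $2$ with $|\tilde A_k|(0,0)=1$. Remark \ref{rmk:uniform-higher-order-bound}-type local Huisken estimates give higher-derivative bounds, so a subsequence converges smoothly to a complete mean curvature flow in $\mathbb{R}^3$ that is ancient (or at least defined on $[-\delta\lambda_k^2,0]\to(-\infty,0]$) and nonflat at the origin.

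The limit is then identified using the height bound. The rescaled height oscillation is $\lambda_k\operatorname{osc}u(\cdot,t)\lesssim \lambda_k e^{-ct_k}$. Since $\lambda_k$ is at most polynomial in $t_k$ by Lemma \ref{lem:A2_time_dependent_bound}, this tends to $0$. Hence the limit flow lies in a horizontal plane $\{x_3=0\}$, i.e.\ it is contained in a plane. Being a smooth immersed limit of surfaces with bounded curvature, it must be that plane (possibly with multiplicity), which is flat. This contradicts $|\tilde A|(0,0)=1$.

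The main obstacle is the compactness step. One has to ensure the limit is a genuine smooth surface contained in the plane and not, say, a limit of sheets folding vertically. Local graphical representation over tangent planes with curvature $\le 2$ handles this. A surface of bounded curvature squeezed into a slab of width $\to 0$ must have tangent planes converging to horizontal on unit balls, and smooth convergence then forces $\tilde A\to0$. The time-dependent gradient bound of Theorem \ref{thm:gradient_bound-sigma-0} ensures the flow stays an immersion, so no sheet-separation issue arises. The same slab argument, applied without rescaling, gives the conditional step in the first paragraph.
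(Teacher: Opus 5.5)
Your proposal is correct and is essentially the paper's argument: you blow up at points $(x_k,t_k)$ where $|A|$ attains its maximum over $\Sigma\times[0,t_k]$, play the polynomial growth $|A|\le C(1+t)^4$ from Lemma~\ref{lem:A2_time_dependent_bound} against the exponential slab decay from Lemma~\ref{lem:height_bound} to force the rescaled limit into a plane, and contradict $|\tilde A|(0,0)=1$. The one difference is the step ``bounded $|A|$ $\Rightarrow$ bounded $w$'': the paper obtains it from uniform $C^k$ bounds on $u$ plus interpolation, which tacitly needs gradient control since curvature and height bounds alone do not bound $|Du|$, whereas your local-graph-over-the-tangent-plane argument inside a slab of width tending to zero proves it directly and is the more robust way to close that step.
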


\begin{proof}
We proceed by contradiction. Assume the uniform gradient bound fails. Then by Theorem \ref{thm:gradient_bound-sigma-0}, we have 
\begin{equation}\label{eqn:blow-up-of-gradient}
\limsup_{t \to \infty} \max_{x \in \Sigma} w(x,t) = \infty.
\end{equation}
We claim that we must also have
$$\limsup_{t \to \infty} \max_{x \in \Sigma} |A|(x,t) = \infty.$$

Suppose for contradiction that $|A(x,t)| \le K$ uniformly for all $x\in \Sigma$ and $t > 0$. Then following Huisken \cite[Lemma 7.2]{H86} (see also Remark \ref{rmk:uniform-higher-order-bound}), all higher-order covariant derivatives of $A$ are also uniformly bounded: $|\nabla^m A| \le C_m$. Since the evolving surfaces $\Sigma_t$ are graphs over a compact surface $\Sigma$, uniform bounds on $A$ and all its covariant derivatives, combined with the uniform height bound (Lemma \ref{lem:height_bound}), imply that the height function $u(\cdot, t)$ is uniformly bounded in $C^k(\Sigma)$ for all integers $k \ge 0$. Moreover, by Lemma \ref{lem:height_bound}, the flow converges exponentially in $C^0(\Sigma)$ to the equidistant surface $u \equiv r_\sigma = \text{arctanh}(\sigma/2)$. Standard Gagliardo-Nirenberg interpolation inequalities guarantee that a sequence uniformly bounded in $C^2(\Sigma)$ and converging in $C^0(\Sigma)$ must also converge in $C^1(\Sigma)$. Consequently, the spatial gradient $Du(\cdot, t)$ converges uniformly to $0$ and $w = \sqrt{1 + |Du|_{g_0}^2/\cosh^2(u)}$ converges uniformly to $1$ everywhere on $\Sigma$ as $t \to \infty$, which contradicts the assumption \eqref{eqn:blow-up-of-gradient}.

Therefore, there exists a sequence of times $t_k \to \infty$ and points $x_k \in \Sigma$ such that:
\begin{equation}
A_k := |A(x_k, t_k)| = \max_{x \in \Sigma, \, t \le t_k} |A(x,t)| \to \infty.
\end{equation}
Note that the points $(x_k,t_k)$ achieving this maximum of $|A|$ need not coincide with the points maximizing the gradient $w$.

Now we translate the space-time origin to $p_k = (x_k, u(x_k, t_k), t_k)$ and perform a parabolic rescaling by the factor $\lambda_k = A_k$. Let $u_k = u(x_k, t_k)$. We define the rescaled height function $\tilde{u}_k: \tilde{\Omega}_k \times [\tilde{t}_{start}, 0] \to \mathbb{R}$ by:
\begin{equation}
\tilde{u}_k(\tilde{x}, \tilde{t}) = A_k \left( u(x_k + A_k^{-1}\tilde{x}, t_k + A_k^{-2}\tilde{t}) - u_k \right),
\end{equation}
where $\tilde{\Omega}_k = A_k (\Sigma - x_k)$ and $\tilde{t}_{start} = -A_k^2 t_k$. The rescaled surface is the graph $\tilde{\Sigma}_k^{\tilde{t}} = \{ (\tilde{x}, \tilde{u}_k(\tilde{x}, \tilde{t})) : \tilde{x} \in \tilde{\Omega}_k \}$. 

By definition, the rescaled surfaces satisfy $|\tilde{A}_k(\tilde{x}, \tilde{t})|^2 = A_k^{-2}|A|^2 \le 1$ for all $\tilde{t} \in [\tilde{t}_{start}, 0]$, and attain exactly $|\tilde{A}_k(0,0)| = 1$. We then apply a global rotation $R_k \in SO(3)$ to the ambient space such that $R_k(\nu_k) = \mathbf{n}$, where $\nu_k$ is the unit normal of the evolving surface at $p_k$. 
Let $\hat{\Sigma}_k^{\tilde{t}} = R_k(\tilde{\Sigma}_k^{\tilde{t}})$. As $A_k \to \infty$, the ambient space flattens to Euclidean space $\mathbb{R}^3$. Because $|\tilde{A}_k(\tilde{x}, \tilde{t})|^2 \le 1$ for all $\tilde{t} \in [\tilde{t}_{start}, 0]$, all higher-order derivatives of $\hat{A}_k$ are uniformly bounded (see Remark \ref{rmk:uniform-higher-order-bound}) which guarantees that a subsequence converges smoothly to an ancient limit flow $\hat{\Sigma}_\infty^{\tilde{t}}$ in $\mathbb{R}^3$, defined for $\tilde{t} \in (-\infty, 0]$, with
\begin{equation}\label{eqn:limit_curvature_one}\notag
|\hat{A}_\infty(0,0)| = 1.
\end{equation}

Now by Lemma \ref{lem:A2_time_dependent_bound}, we have
\begin{equation}\label{eqn:polynomial_curvature_bound}\notag
A_k = \max_{x \in \Sigma, \, t \le t_k} |A(x,t)| \le C (1 + t_k)^4.
\end{equation}
Note that by Lemma \ref{lem:height_bound}, the original evolving surface $\Sigma_{t_k}$ lies within a horizontal slab of width less than $C e^{-\frac{4-\sigma^2}{2} t_k}$. Under the parabolic rescaling by $\lambda_k = A_k$, the rescaled width of this slab is bounded by
\begin{equation}\notag
A_k \cdot C e^{- \frac{4-\sigma^2}{2}  t_k}  \leq C (1+t_k)^4 e^{- \frac{4-\sigma^2}{2}  t_k} \to 0 \quad \text{as }t_k \to \infty.
\end{equation}

The rotations $R_k$ preserve ambient distances and thus, the smooth ancient limit surface $\hat{\Sigma}_\infty$ must be a flat plane, which contradicts $|\hat{A}_\infty(0,0)| = 1$. Therefore, the assumption \eqref{eqn:blow-up-of-gradient} must be false. The gradient $w(x,t)$ is uniformly globally bounded for all time.
\end{proof}

\begin{proof}[Proof of Theorem \ref{main-thm}]
By Theorem \ref{thm:gradient_bound_general}, the gradient function $w$ of the evolving surfaces $\Sigma_t$ is globally and uniformly bounded as long as the flow exists. Combined with the uniform height bound from Lemma \ref{lem:height_bound}, this guarantees that the second fundamental form and all its higher-order covariant derivatives are uniformly bounded globally in space and time (c.f. Remark \ref{rmk:uniform-higher-order-bound}). This precludes the formation of finite-time singularities, ensuring that the modified mean curvature flow exists smoothly for all time $t \in [0, \infty)$ and that the evolving surfaces remain geodesic graphs over $\Sigma$.

By Corollary \ref{cor:exponential_bound}, the height function of the flow converges exponentially in $C^0(\Sigma)$ to the equidistant surface $\Sigma(r_\sigma)$, where $r_\sigma = \text{arctanh}\left(\frac{\sigma}{2}\right)$. Because the height function and all its higher-order derivatives are uniformly bounded for all time, standard interpolation inequalities guarantee that this exponential $C^0$ convergence elevates to smooth convergence in the $C^\infty$ topology. Therefore, the flow converges smoothly to $\Sigma(r_\sigma)$ as $t \to \infty$.
\end{proof}

\bibliographystyle{amsplain}
\bibliography{ref}
\end{document}